\newtheorem{theorem}{Theorem}[section]
\newtheorem{lemma}[theorem]{Lemma}
\newtheorem{proposition}[theorem]{Proposition}
\newtheorem{corollary}[theorem]{Corollary}
\theoremstyle{definition}
\newtheorem{definition}[theorem]{Definition}
\newtheorem{example}[theorem]{Example}
\newtheorem{conjecture}[theorem]{Conjecture}
\newtheorem{remark}[theorem]{Remark}
\newcommand{\h}{\mathbf{H}}
\renewcommand{\b}{\mathfrak{b}}
\newcommand{\Ga}{\Gamma}
\newcommand{\Z}{\mathbb{Z}}
\newcommand{\converges}{\Rightarrow}
\newcommand{\f}{\mathscr{F}}
\newcommand{\lh}{\mathbf{LH}}
\newcommand{\lhf}{\mathbf{LH}\f}
\newcommand{\hf}{\mathbf{H}\f}
\newcommand{\determines}{\converges}
\newcommand{\x}{\mathscr{X}}
\newcommand{\y}{\mathscr{Y}}
\newcommand{\injects}{\hookrightarrow}
\newcommand{\ben}{\begin{enumerate}}
\newcommand{\een}{\end{enumerate}}
\newcommand{\Q}{{\mathbb Q}}
\begin{document}

\title[A conjecture of Moore]{Nilpotency of Bocksteins, Kropholler's hierarchy and a conjecture of Moore}
\author{Eli Aljadeff and Ehud Meir}
\date{May 10, 2009}
\maketitle
\begin{abstract}
A conjecture of Moore claims that if $\Ga$ is a group and $H$ a finite index subgroup of $\Ga$ such that $\Gamma -  H$ has no elements of prime order (e.g. $\Gamma$ is torsion free), then a $\Gamma$-module which is projective over $H$ is projective over $\Gamma$. The conjecture is known for finite groups. In that case, it is a direct
consequence of Chouinard's theorem which is based on a fundamental result of Serre on the vanishing of products of Bockstein operators. It was observed by Benson, using a construction of Baumslag, Dyer and Heller, that the analog of Serre's Theorem for infinite groups is not true in general. We prove that the conjecture is true for groups which satisfy the analog of Serre's theorem. Using a result of Benson and Goodearl, we prove that the conjecture holds for all groups inside Kropholler's hierarchy $\lhf$, extending a result of Aljadeff, Cornick, Ginosar, and Kropholler. We show two closure properties for the class of pairs of groups $(\Ga,H)$ which satisfy the conjecture, the one is closure under morphisms, and the other is a closure operation which comes from Kropholler's construction. We use this in order to exhibit cases in which the analog of Serre's theorem does not hold, and yet the conjecture is true.
We will show that in fact there are pairs of groups $(\Ga,H)$ in which $H$ is a perfect normal subgroup of prime index in $\Ga$,
and the conjecture is true for $(\Ga,H)$. Moreover, we will show that it is enough to prove the conjecture for groups of this kind only.
\end{abstract}

\begin{center}
Keywords: Cohomology of groups; Kropholler's hierarchy; LHF ; Moore's conjecture; Projectivity over group rings.
\end{center}

\begin{section}{Introduction and statement of results}\label{introduction}
A well known result of Serre (see \cite{Swan}) says that if $\Gamma$ is a torsion
free group and $H$ a subgroup of finite index then they have the
same cohomological dimension. The main task in the proof is to
show that if $cd(H) < n$ then $cd(\Gamma) < \infty$. Once this is
achived it is not difficult to show that in fact $cd(\Gamma)=n$.

In 1976 J. Moore posed a conjecture which is a far reaching
generalization of Serre's theorem.
\begin{conjecture}
Let $\Gamma$ be a torsion free group and $H$ a subgroup of finite
index. Let $M$ be a $\mathbb{Z} \Gamma$ module. Then $M$ is
projective if (and only if) it is projective as a
$\mathbb{Z}H$-module. \end{conjecture}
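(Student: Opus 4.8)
The plan is to recast projectivity of $M$ over $\mathbb{Z}\Gamma$ as the vanishing of a single cohomological obstruction and then to annihilate that obstruction one prime at a time, imitating Chouinard's proof of the finite-group case but with the analog of Serre's theorem on nilpotency of Bocksteins in place of Serre's original result. This will prove the conjecture for those $(\Gamma,H)$ for which the Serre analog is available and, via the Benson--Goodearl theorem, for all $\Gamma$ in $\lhf$, and it will isolate the remaining difficulty.

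\emph{Step 1: reduction to an obstruction class.} Replacing $H$ by its normal core leaves $\mathbb{Z}H$-projectivity intact, so I may assume $H\trianglelefteq\Gamma$ with $Q:=\Gamma/H$ finite of order $n$. Induction from a finite-index subgroup preserves projectivity, and the unit $M\to\mathrm{Ind}_H^\Gamma\mathrm{Res}_H^\Gamma M$, $m\mapsto\sum_{gH}g\otimes g^{-1}m$, followed by the counit is multiplication by $n$; read in complete cohomology $\widehat{\mathrm{Ext}}^*_{\mathbb{Z}\Gamma}$ (complete cohomology in the sense of Vogel and Mislin) this says $\mathrm{cor}^\Gamma_H\circ\mathrm{res}^\Gamma_H=n$, while $\widehat{\mathrm{Ext}}^*_{\mathbb{Z}H}(M,-)=0$ since $M$ is $\mathbb{Z}H$-projective. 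Hence $\widehat{\mathrm{Ext}}^*_{\mathbb{Z}\Gamma}(M,-)$ is $n$-torsion, and $M$ is $\mathbb{Z}\Gamma$-projective exactly when the image of $\mathrm{id}_M$ in $\widehat{\mathrm{Ext}}^0_{\mathbb{Z}\Gamma}(M,M)$ vanishes.

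\emph{Step 2: one prime at a time, mod $p$.} Since the obstruction is $n$-torsion it is enough to kill its $p$-primary part for each $p\mid n$. Passing to the preimage of a Sylow $p$-subgroup of $Q$ (index prime to $p$, so $\mathrm{res}$ is $p$-locally split injective) lets me assume $Q$ is a finite $p$-group; and as $M$ is $\mathbb{Z}$-free with $\overline M:=M/pM$ an $\mathbb{F}_pH$-projective $\mathbb{F}_p\Gamma$-module, the long exact sequence attached to $0\to M\xrightarrow{p}M\to\overline M\to0$ reduces the task to showing $\overline M$ is $\mathbb{F}_p\Gamma$-projective, i.e.\ that $R:=\widehat{\mathrm{Ext}}^*_{\mathbb{F}_p\Gamma}(\overline M,\overline M)$ is zero. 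For each character $\chi\colon\Gamma\twoheadrightarrow Q\to\mathbb{F}_p$ the kernel $\Gamma_\chi$ contains $H$, so $\overline M|_{\Gamma_\chi}$ is projective; tensoring $\overline M$ with the periodicity short (or four-term) exact sequence of the cyclic quotient $\Gamma/\Gamma_\chi$ yields an exact sequence of $\mathbb{F}_p\Gamma$-modules $0\to\overline M\to P\to\cdots\to\overline M\to0$ with the middle terms projective, so $\overline M$ is a self-syzygy and multiplication by $\beta(\chi)\in H^2(\Gamma;\mathbb{F}_p)$ acts invertibly on $R$ (for the canonical action of $H^*(\Gamma;\mathbb{F}_p)$).

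\emph{Step 3: Serre's theorem, and the obstacle.} If some product $\beta(\chi_1)\cdots\beta(\chi_r)$ of Bocksteins of characters of $Q$ is nilpotent in $H^*(\Gamma;\mathbb{F}_p)$, then it acts on $R$ both nilpotently and invertibly, hence $R=0$ and $M$ is projective. When $Q$ is not elementary abelian such a relation is free: pull Serre's relation back from $H^*(Q;\mathbb{F}_p)$; and a chain of index-$p$ refinements then reduces the whole problem to the case $Q\cong\mathbb{Z}/p$, where the only thing needed is nilpotency of a single $\beta(\chi)$ in $H^*(\Gamma;\mathbb{F}_p)$. This is precisely the hard part: by Benson's example, built from the Baumslag--Dyer--Heller construction, $\beta(\chi)$ need not be nilpotent for torsion-free $\Gamma$, so the argument as it stands settles only the $(\Gamma,H)$ that satisfy the analog of Serre's theorem. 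To push past this I would (i) invoke the Benson--Goodearl theorem on periodic flat modules to verify the Serre analog for all $\Gamma\in\lhf$ (extending the earlier result of Aljadeff--Cornick--Ginosar--Kropholler), and (ii) prove that the class of pairs satisfying the conjecture is closed under group homomorphisms and under Kropholler's hierarchy construction; this both enlarges the class beyond the Serre-analog case and reduces the full conjecture to the single case where $H$ is a perfect normal subgroup of prime index in $\Gamma$, the case in which $H^*(\Gamma;\mathbb{F}_p)$ can be nonzero in every degree and which remains open.
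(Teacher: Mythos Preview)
The statement is Moore's conjecture, which the paper does not prove in general and which remains open; you correctly recognize this, and your proposal is really a proof of the partial results the paper establishes together with an identification of the residual obstacle.

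Your route to ``nilpotent Bockstein $\Rightarrow$ conjecture'' differs from the paper's. You work in Vogel--Mislin complete cohomology, reduce modulo $p$, and exploit that $\beta(\chi)$ acts invertibly on $\widehat{\mathrm{Ext}}^*_{\mathbb{F}_p\Gamma}(\overline M,\overline M)$ while a nilpotent element of $H^*(\Gamma;\mathbb{F}_p)$ acts nilpotently. The paper instead stays with ordinary $\mathrm{Ext}$ over $\mathbb{Z}$ and uses the Lyndon--Hochschild--Serre spectral sequence for the extension $1\to H\to\Gamma\to\Gamma/H\to 1$: since $M$ is $H$-projective the sequence degenerates, cup product with $\beta_M$ is an isomorphism $\mathrm{Ext}^n_\Gamma(M,N)\to\mathrm{Ext}^{n+2}_\Gamma(M,N)$ for $n>0$, and nilpotence of $\beta_M$ forces these groups to vanish (Proposition~\ref{nilpotency}). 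Both arguments are sound; yours is slick but imports the complete-cohomology machinery, and carries one caveat: vanishing of $\widehat{\mathrm{Ext}}^*$ in the Mislin sense yields a priori only finite projective dimension, not projectivity, so you are implicitly using the fact---recorded in the paper as Corollary~\ref{finiteness} and Remark~\ref{finiteness remark}---that $H$-projective together with finite projective dimension over $\Gamma$ implies $\Gamma$-projective. The paper's spectral-sequence argument is more elementary and self-contained.

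Your item (i) is misstated, and the point is not merely verbal. The paper does \emph{not} verify the Serre analog (nilpotency of the Bockstein) for groups in $\lhf$; on the contrary, one of its central points (Theorem~\ref{main1} and the corollary following Theorem~\ref{still in LHF}) is that the Bockstein can fail to be nilpotent even inside $\lhf$. The $\lhf$ case is handled by an entirely different mechanism: for $\Gamma\in\hf$ one tensors $M$ with the cellular chain complex of the finite-dimensional acyclic $\Gamma$-complex to obtain a finite-length resolution of $M$ whose terms are induced from stabilizer subgroups; since stabilizers lie lower in the hierarchy, $M$ is projective over them by induction, the induced terms are $\Gamma$-projective, and the finite-projective-dimension fact above finishes. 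Benson--Goodearl enters only at the locally-$\hf$ step, to upgrade flatness over $\Gamma$ (obtained from projectivity over every finitely generated subgroup) to projectivity. Your item (ii) and the reduction to perfect $H$ of prime index do accurately summarize the paper's closure results (Proposition~\ref{closure}, Proposition~\ref{class-close}, and Section~\ref{main section}).
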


More generally
\begin{conjecture} Let $\Gamma$ be any group and $H$ a subgroup of finite index.
Assume no elements of prime order lie in $\Gamma - H$
(cf. Moore's condition). Then the same conclusion holds, that is,
if $M$ is a $\mathbb{Z} \Gamma$-module which is projective over
$\mathbb{Z}H$, then is also projective over $\mathbb{Z} \Gamma$.
\end{conjecture}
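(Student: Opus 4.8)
The plan is not to attack Conjecture~2 directly — Benson's observation that the analog of Serre's theorem fails for general infinite groups rules out a uniform Chouinard-style argument — but rather to isolate the homological hypothesis on $(\Ga,H)$ that makes the conjecture work, to verify that hypothesis for a large class of groups, and then to build closure and reduction operations that simultaneously enlarge the class of pairs for which the conjecture is known and collapse the general case. For the core implication, fix a pair $(\Ga,H)$ with $H$ of finite index, $\Ga-H$ free of elements of prime order, and a $\Z\Ga$-module $M$ projective over $\Z H$. The composite of restriction $\Ext^{i}_{\Z\Ga}(M,-)\to\Ext^{i}_{\Z H}(M,-)$ with corestriction back is multiplication by $[\Ga:H]$, and the restriction map vanishes for $i>0$ because $\Ext^{i}_{\Z H}(M,-)=0$ there; hence $\Ext^{i}_{\Z\Ga}(M,-)$ is annihilated by $[\Ga:H]$ for all $i>0$, so only primes $p\mid[\Ga:H]$ can obstruct projectivity of $M$ over $\Z\Ga$. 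On the other hand, Moore's condition forces every finite subgroup of $\Ga$ of prime-power order to lie inside $H$ (an element of prime order outside $H$ is forbidden), so $M$ restricts projectively to each of them. The step that turns these two facts into projectivity of $M$ over $\Z\Ga$ is a Chouinard-type reduction to elementary abelian $p$-subgroups inside $\Ga$, and what licenses it is the nilpotency of the product of the Bockstein operators attached to such a subgroup in $H^{*}(\Ga;-)$ — the analog of Serre's theorem, which I will call condition~$(S)$. So the first result to prove is: Moore's condition together with~$(S)$ implies Conjecture~2 for $(\Ga,H)$.

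Next I would show that every group $\Ga\in\lhf$ satisfies condition~$(S)$ — the ``nilpotency of Bocksteins'' of the title — by feeding in a theorem of Benson and Goodearl on flat modules over group algebras of $\lhf$ groups, which has the effect of detecting projectivity of a module over $\Z\Ga$ on the restrictions to the finite subgroups of $\Ga$. Together with the first step and with the finite-group case of the conjecture — Chouinard's theorem, itself a consequence of Serre's vanishing result, applied to each pair $(F,F\cap H)$ with $F\le\Ga$ finite, each of which still satisfies Moore's condition — this gives Conjecture~2 for all pairs $(\Ga,H)$ with $\Ga\in\lhf$, recovering and extending the result of Aljadeff, Cornick, Ginosar and Kropholler.

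For the closure and reduction statements, let $\mathcal P$ be the class of pairs $(\Ga,H)$ for which the conjecture holds. I would prove that $\mathcal P$ is closed under morphisms $\phi\colon\Ga\to Q$ — relating $(\Ga,H)$ to its image pair $(\phi(\Ga),\phi(H))$, or to a preimage pair, by a change-of-rings and inflation argument — and that it is closed under the single-step operation $\lh(-)$ from Kropholler's construction, which one proves by transfinite induction mirroring the construction of $\lhf$ itself. Stability under $\lh(-)$ already shows that $\mathcal P$ is strictly larger than the class of pairs satisfying~$(S)$: one feeds in a Baumslag--Dyer--Heller-type pair, where~$(S)$ fails by Benson's observation, but which is reached by these operations and hence lies in $\mathcal P$. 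The last move is the reduction: starting from an arbitrary pair $(\Ga,H)$ satisfying Moore's condition, one builds — using an amalgamated-product or wreath-type construction together with the closure properties — a pair $(\Ga',H')$ in which $H'$ is a perfect normal subgroup of prime index in $\Ga'$ and for which $(\Ga',H')\in\mathcal P$ forces $(\Ga,H)\in\mathcal P$; hence the conjecture in general is equivalent to its special case with $H$ perfect, normal, of prime index.

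I expect the main obstacle to be propagating condition~$(S)$ through the entire hierarchy $\lhf$: this rests on the full strength of the Benson--Goodearl structural result and on carefully tracking which finite elementary abelian subgroups can actually obstruct the projectivity of $M$ relative to $H$. By contrast the two closure properties are essentially formal, and the reduction to perfect normal subgroups of prime index, though a delicate piece of group theory, should be routine once those properties are available.
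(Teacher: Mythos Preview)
Your plan contains a genuine error that breaks the $\lhf$ step. You propose to establish the conjecture for $\Ga\in\lhf$ by showing that every such $\Ga$ satisfies condition~$(S)$, the nilpotency of the relevant Bocksteins. This is false, and is in fact one of the headline outcomes of the paper: by embedding a pair $(\Ga_0,H_0)$ with $\Ga_0\in\lhf$ into a pair $(\Ga,H)$ with $H$ acyclic and $\Ga$ still in $\lhf$ (Theorem~\ref{main1} and Theorem~\ref{still in LHF}), one obtains $\lhf$-pairs for which $\beta_{\Ga,H}$ is provably \emph{not} nilpotent, since inflation $H^*(\Ga/H,\Z)\to H^*(\Ga,\Z)$ is an isomorphism when $H$ is acyclic. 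So condition~$(S)$ cannot be the mechanism behind the $\lhf$ result, and your description of Benson--Goodearl is also off: their theorem does not detect projectivity on finite subgroups, it upgrades \emph{flatness over $\Ga$} to projectivity given projectivity over a finite-index subgroup.

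The paper's route to $\lhf$ does not pass through~$(S)$. After the reduction to $H$ normal of prime index, one argues by transfinite induction on the least $\alpha$ with $\Ga\in\h_\alpha\f$. At a successor step, $\Ga$ acts cellularly on a finite-dimensional contractible complex with stabilizers $K_i\in\h_\gamma\f$; by induction $M$ is projective over each $K_i$, and tensoring the cellular chain complex with $M$ over $\Z$ produces a \emph{finite-length} resolution of $M$ by $\Ga$-projectives (each term a sum of $\Ind_{K_i}^{\Ga}M$). The crucial lemma is then not Serre-type nilpotency but the soft fact (Corollary~\ref{finiteness}, Remark~\ref{finiteness remark}) that a $\Ga$-module of finite projective dimension which is projective over a finite-index subgroup is already projective. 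The passage from $\hf$ to $\lhf$ is where Benson--Goodearl enters: projectivity over every finitely generated subgroup gives flatness over $\Ga$, and flat plus $H$-projective gives $\Ga$-projective. A smaller slip: Moore's condition forces the \emph{elementary abelian} subgroups into $H$, not all finite $p$-subgroups --- your claim fails already for $\Ga=\Z/p^2$, $H=p\Z/p^2$. Your outline of the closure and reduction parts is broadly in line with the paper, but the engine you propose for $\lhf$ cannot run.
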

Throughout the paper we will use the following terminology:
\begin{enumerate}
\item
We say that Moore's conjecture holds for the pair $(\Gamma,H)$ if
(a) $(\Gamma,H)$ satisfies Moore's condition. (b) Every $\mathbb{Z}
\Gamma$-module $M$ which is projective over $\mathbb{Z}H$, is also
projective over $\mathbb{Z} \Gamma$.
\item
We say that Moore's conjecture holds for a group $\Gamma$ if the
conjecture holds for any pair $(\Gamma, H)$ which satisfies Moore's
condition. So for instance Moore's conjecture holds for a cyclic
group of prime order $\Gamma$ but not for the pair $(\Gamma,
\{e\})$.
\end{enumerate}

$\mathbf{Remarks}$
\begin{enumerate}
\item
Moore's conjecture may be rephrased as follows: Let $\Gamma$ and
$H$ be as above, then for any $\mathbb{Z} \Gamma$-module $M$,
$Proj.dim_{\mathbb{Z} \Gamma}(M)=
Proj.dim_{\mathbb{\mathbb{Z}}H}(M)$ (indeed if the conjecture is
true and \\$Proj.dim_{\mathbb{\mathbb{Z}}H}(M)=n$, then the $n$-th
Syzygy of any projective resolution of $M$ over $\mathbb{Z}
\Gamma$ is projective).
\item
Moore's condition is necessary in the conjecture in the following
sense (See \cite {Al2}): Let $\Gamma$ be a group and $H$ a normal subgroup
of finite index. If there are elements of prime order in $\Gamma - H$
then there exists a $\mathbb{Z} \Gamma$-module $M$,
projective over $\mathbb{\mathbb{Z}}H$ but not projective over
$\mathbb{Z} \Gamma$.

\item
It is not difficult to show (as in Serre's theorem) (see \cite{Swan}), that
if $\Gamma$ is arbitrary and $H$ is of finite index then
$Proj.dim_{\Z \Gamma}(M) < \infty$ implies $Proj.dim_{\Z
\Gamma}(M)=Proj.dim_{\Z H}(M)$. So the real content of Moore's
conjecture is in the statement: If $\Gamma$ is torsion free then
$Proj.dim_{\Z H}(M) < \infty \Longrightarrow Proj.dim_{\Z \Gamma}(M) <
\infty$.

\item
Serre's theorem is the case where $M=\Z$ with the trivial $\Gamma$
action.
\end{enumerate}

The first step in our analysis will be to reduce the conjecture to
the case in which the group $H$ is normal in $\Gamma$ and of prime
index $p$ (see section \ref{general}). In our exposition below we will assume
that this is indeed the case unless we state otherwise.

Notice that the general Moore's conjecture is meaningful for
finite groups. In that case it is known to be true and it is in
fact a consequence of Chouinard's theorem (see \cite{Ch}). Chouinard's
proof is based on a fundamental idea of Serre which is formulated
below only in a special case. We start with a definition.

Let $\Gamma$ be any group and $H$ a normal subgroup of prime index
$p$. Consider the non-split extension $\widehat{\beta}:
1\longrightarrow \Z\longrightarrow \Z\longrightarrow \Gamma/H \cong
\Z_p \longrightarrow 1$ as an element in $H^{2}(\Gamma/H, \Z)$ and
let $\beta_{\Gamma/H}= inf_{\Gamma/H}^{\Gamma}(\widehat{\beta})
\in H^{2}(\Gamma, \Z)$ where $inf$ denotes the inflation map. We
refer to the element $\beta_{\Gamma/H}$ as the Bockstein operator
or just the Bockstein, that corresponds to the pair $(\Gamma,H)$.

\begin{theorem}\label{Serre}(Serre, see \cite{Evens})
Let $\Gamma$ be a finite group and $H$ a normal subgroup of prime
index. Then Moore's condition holds if and only if
$\beta_{\Gamma/H}$ is nilpotent in the cohomology ring
$H^{*}(\Gamma,\mathbb{Z})$.

\end{theorem}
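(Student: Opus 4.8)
The plan is to reduce the statement to the interplay between the Bockstein operator $\beta_{\Gamma/H}$ and the restriction--corestriction machinery for the index-$p$ subgroup $H\le\Gamma$. First I would recall that for a finite group $\Gamma$ with normal subgroup $H$ of prime index $p$, the quotient $\Gamma/H\cong\Z_p$ acts on everything and one has the standard five-term (or Gysin-type) exact sequence relating $H^*(\Gamma,\Z)$ and $H^*(H,\Z)$, in which multiplication by $\beta_{\Gamma/H}$ appears as the connecting map. Concretely, inflating the periodicity class of $\Z_p$ gives that $\beta_{\Gamma/H}$ restricts to zero on $H$ (it is inflated from $\Gamma/H$, so $\res^\Gamma_H\beta_{\Gamma/H}=\inf^H_{\{e\}}(\res^{\Z_p}_{\{e\}}\widehat\beta)=0$), and conversely the kernel of $\res^\Gamma_H$ in positive degrees is precisely the ideal generated by $\beta_{\Gamma/H}$ together with the image of inflation in degree one.

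For the forward direction, suppose Moore's condition holds, i.e. $\Gamma-H$ contains no element of prime order. I would invoke Serre's theorem on products of Bocksteins: for each elementary abelian $p$-subgroup $E$ of $\Gamma$, Moore's condition forces $E\subseteq H$, and then $\res^\Gamma_E\beta_{\Gamma/H}=\res^H_E(\res^\Gamma_H\beta_{\Gamma/H})=0$. By Quillen's stratification (or directly by Serre's argument, since $\beta_{\Gamma/H}\in H^2(\Gamma,\Z)$ and its mod-$p$ reduction is a product of Bockstein classes of degree-one classes — indeed it \emph{is} a single Bockstein of a degree-one class pulled back from $\Z_p$), a class in $H^*(\Gamma,\Z)$ whose restriction to every elementary abelian $p$-subgroup is nilpotent is itself nilpotent. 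Since $\beta_{\Gamma/H}$ restricts to $0$ on every relevant $E$, a fortiori it is nilpotent.

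For the converse, suppose $\beta_{\Gamma/H}$ is nilpotent, say $\beta_{\Gamma/H}^N=0$. I would argue by contradiction: if Moore's condition fails, there is an element $g\in\Gamma-H$ of prime order $q$. Then $q=p$ (since $g^p\in H$ and $g$ has order $q$ prime, either $q\nmid p$ forcing $g\in H$, or $q=p$), and the cyclic subgroup $C=\langle g\rangle\cong\Z_p$ maps isomorphically onto $\Gamma/H$. Hence $\res^\Gamma_C\beta_{\Gamma/H}$ equals the image under the induced map $H^2(\Gamma/H,\Z)\to H^2(C,\Z)$ of the class $\widehat\beta$, which is a generator of $H^2(\Z_p,\Z)\cong\Z_p$ and is therefore a periodicity element — in particular it is \emph{not} nilpotent in $H^*(C,\Z)$. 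But $\res^\Gamma_C(\beta_{\Gamma/H}^N)=(\res^\Gamma_C\beta_{\Gamma/H})^N\ne 0$, contradicting $\beta_{\Gamma/H}^N=0$.

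The main obstacle, and the step requiring the most care, is the forward direction: one must be sure that the detection-on-elementary-abelians principle applies to the class $\beta_{\Gamma/H}$. This is exactly the content of Serre's theorem as cited (\cite{Evens}), which guarantees that a product of Bockstein operators $\beta(x_i)$ with $x_i\in H^1(\Gamma,\Z_p)$ is nilpotent as soon as Moore's condition holds; here only one factor is needed, so the geometric input is minimal, but writing the mod-$p$ reduction of $\beta_{\Gamma/H}$ as such a Bockstein (via the short exact sequence $0\to\Z\xrightarrow{p}\Z\to\Z_p\to 0$ and naturality of inflation) is the key bookkeeping point. Everything else is a direct manipulation of restriction maps and the ring structure of $H^*(\Gamma,\Z)$.
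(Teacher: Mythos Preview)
The paper does not supply its own proof of this theorem; it is quoted as a known result of Serre with a reference to \cite{Evens}, so there is no in-paper argument to compare against.

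Your proposal is essentially correct. The converse direction is clean and complete: if Moore's condition fails, the prime-order element $g\in\Gamma\setminus H$ necessarily has order $p$, the composite $C=\langle g\rangle\hookrightarrow\Gamma\twoheadrightarrow\Gamma/H$ is an isomorphism, and hence $\res^\Gamma_C\beta_{\Gamma/H}$ is the periodicity generator of $H^*(\Z_p,\Z)$, which is non-nilpotent.

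For the forward direction your idea is right---Moore's condition forces every elementary abelian subgroup (for any prime) into $H$, so $\beta_{\Gamma/H}$ restricts to zero on all of them, and Quillen's detection theorem then gives nilpotency. Two small points deserve tightening. First, Quillen's theorem is stated for $\mathbb{F}_p$-coefficients, whereas $\beta_{\Gamma/H}\in H^2(\Gamma,\Z)$; you should pass to the mod-$p$ reduction $\bar\beta$, deduce $\bar\beta^N=0$, and then lift back using $p\beta_{\Gamma/H}=0$ and finiteness of each $H^{2kN}(\Gamma,\Z)$ (for instance $\beta_{\Gamma/H}^N=p\gamma$ gives $\beta_{\Gamma/H}^{kN}\in p^kH^*(\Gamma,\Z)=0$ for $k$ large). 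Second, your aside that one could proceed ``directly by Serre's argument'' is misleading: Serre's original vanishing-of-products-of-Bocksteins theorem is formulated for $p$-groups, and here $\Gamma$ is an arbitrary finite group, so the Quillen--Venkov detection principle (or a preliminary reduction to a Sylow $p$-subgroup) really is needed.
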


Chouinard's idea was to use the nilpotency of the Bockstein
$\beta_{\Gamma/H}$ in order to get the desired result (it should
be mentioned that his result is more general).

Our first observation is that the nilpotency of $\beta_{\Gamma/H}$
plays the same role for infinite groups as for finite groups.

\begin{proposition}\label{nil-conj}

Let $(\Gamma,H)$ be a group and a normal subgroup of prime index. If
$\beta_{\Gamma/H}$ is nilpotent in the cohomology ring, then Moore's
conjecture is true for $(\Gamma,H)$ (in particular Moore's condition
holds for $(\Gamma,H)$).

\end{proposition}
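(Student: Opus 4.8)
The plan is to translate the projectivity of $M$ over $\Z\Gamma$ into a vanishing statement for the cohomology of $\Gamma$ with suitable coefficients, and then to force that vanishing from the nilpotency of $\beta_{\Gamma/H}$ by means of a Gysin-type long exact sequence relating $H^{*}(\Gamma,-)$ and $H^{*}(H,-)$. For the translation: since $M$ is projective over $\Z H$ it is free over $\Z$, hence for every $\Z\Gamma$-module $N$ there is a natural isomorphism $\Ext^{n}_{\Z\Gamma}(M,N)\cong H^{n}(\Gamma,\Hom_{\Z}(M,N))$, and $M$ is projective over $\Z\Gamma$ precisely when these groups vanish for all $n\ge 1$ and all $N$. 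Put $W:=\Hom_{\Z}(M,N)$. Realizing $M$ as a direct summand of a free $\Z H$-module exhibits $W$ as a direct summand of a (possibly infinite) product of copies of $\Hom_{\Z}(\Z H,N)=\mathrm{Coind}_{\{e\}}^{H}N$; since group cohomology is additive and commutes with products, Shapiro's lemma gives $H^{n}(H,W)=0$ for all $n\ge 1$. So it suffices to show: if a $\Z\Gamma$-module $W$ satisfies $H^{n}(H,W)=0$ for all $n\ge 1$ and $\beta_{\Gamma/H}$ is nilpotent, then $H^{n}(\Gamma,W)=0$ for all $n\ge 1$.

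For this I would use the degree-two piece of the standard periodic free resolution of $\Z$ over $\Z[\Gamma/H]=\Z[\Z_{p}]$, namely the $2$-extension
\[
0\longrightarrow\Z\longrightarrow\Z[\Gamma/H]\xrightarrow{\sigma-1}\Z[\Gamma/H]\xrightarrow{\varepsilon}\Z\longrightarrow 0,
\]
which, inflated to $\Gamma$, represents $\beta_{\Gamma/H}\in H^{2}(\Gamma,\Z)$. This sequence is split over $\Z$, so tensoring it over $\Z$ with $W$ keeps it exact and produces a $2$-extension of $\Z\Gamma$-modules $0\to W\to A\to A\to W\to 0$, where $A:=\Z[\Gamma/H]\otimes_{\Z}W\cong\mathrm{Ind}_{H}^{\Gamma}\mathrm{Res}_{H}^{\Gamma}W$ by the tensor identity. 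Since $[\Gamma:H]=p<\infty$, Shapiro's lemma gives $H^{n}(\Gamma,A)\cong H^{n}(H,W)$, which vanishes for $n\ge 1$ by the standing assumption on $W$. Splitting the $2$-extension into two short exact sequences and chasing the resulting long exact cohomology sequences, the vanishing of $H^{n}(\Gamma,A)$ for all $n\ge 1$ makes both connecting homomorphisms isomorphisms in degrees $\ge 1$; their composite is, up to sign, cup product with $\beta_{\Gamma/H}$. Hence $\cup\,\beta_{\Gamma/H}\colon H^{n}(\Gamma,W)\to H^{n+2}(\Gamma,W)$ is an isomorphism for every $n\ge 1$.

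Iterating, $\bigl(\cup\,\beta_{\Gamma/H}\bigr)^{k}=\cup\,(\beta_{\Gamma/H})^{k}$ is an isomorphism $H^{n}(\Gamma,W)\to H^{n+2k}(\Gamma,W)$ for all $n\ge 1$ and all $k$; but nilpotency gives $(\beta_{\Gamma/H})^{k}=0$ for some $k$, so this map is also zero, forcing $H^{n}(\Gamma,W)=0$ for all $n\ge 1$. By the reduction, $\Ext^{n}_{\Z\Gamma}(M,N)=0$ for all $n\ge 1$ and all $N$, i.e.\ $M$ is projective over $\Z\Gamma$. Finally, if some $x\in\Gamma-H$ had prime order, then $xH$ generates $\Gamma/H\cong\Z_{p}$, which forces $x$ to have order $p$ and the composite $\langle x\rangle\hookrightarrow\Gamma\twoheadrightarrow\Gamma/H$ to be an isomorphism; restricting $\beta_{\Gamma/H}$ to $\langle x\rangle$ then yields a generator of $H^{2}(\Z_{p},\Z)$ (a nonzero element of a cyclic group of order $p$), which is not nilpotent in $H^{*}(\Z_{p},\Z)$ — contradicting nilpotency of $\beta_{\Gamma/H}$. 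Hence Moore's condition holds for $(\Gamma,H)$, and so Moore's conjecture holds for $(\Gamma,H)$.

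The one step that is not purely formal — and where I expect the real work to lie — is the identification in the second paragraph of the iterated connecting homomorphism of $0\to W\to A\to A\to W\to 0$ with cup product by $\beta_{\Gamma/H}$: one must verify that the inflated periodic $2$-extension represents the Bockstein class (up to an integer prime to $p$, which is harmless for nilpotency) and that Yoneda composition of the two short exact sequences computes the cup product. Everything else — Shapiro's lemma, the tensor identity $\Z[\Gamma/H]\otimes_{\Z}W\cong\mathrm{Ind}_{H}^{\Gamma}\mathrm{Res}_{H}^{\Gamma}W$, the comparison $\Ext^{n}_{\Z\Gamma}(M,N)\cong H^{n}(\Gamma,\Hom_{\Z}(M,N))$ for $\Z$-free $M$, and the bookkeeping with infinitely generated modules (in particular that cohomology commutes with the infinite products that appear) — is routine.
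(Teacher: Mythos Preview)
Your proof is correct and follows essentially the same line as the paper's. Both arguments hinge on the same two facts: (i) when $M$ is $H$-projective, cup product with the Bockstein induces an isomorphism on the relevant $\Ext$/cohomology groups in positive degrees, and (ii) nilpotency then forces these groups to vanish. The only difference is the tool used to establish (i): the paper invokes the Lyndon--Hochschild--Serre spectral sequence $H^s(\Gamma/H,\Ext^t_H(M,N))\Rightarrow\Ext^{s+t}_\Gamma(M,N)$, which collapses since $M$ is $H$-projective and identifies $\Ext^n_\Gamma(M,N)$ with the periodic cohomology $H^n(\Gamma/H,\Hom_H(M,N))$; you instead tensor the explicit periodic $2$-extension with $W=\Hom_\Z(M,N)$ and chase the resulting long exact sequences, using Shapiro for the middle terms. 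Your route is more hands-on and avoids spectral sequences entirely, which some readers may prefer; the paper's route is shorter once the LHS machinery is in place and packages the periodicity statement more cleanly via $\beta_M$. Two minor remarks: your justification that $H^n(H,W)=0$ via ``cohomology commutes with products'' is correct but unnecessarily roundabout---the identification $H^n(H,\Hom_\Z(M,N))\cong\Ext^n_H(M,N)=0$ is immediate; and you supply an explicit argument for the parenthetical claim that Moore's condition holds, which the paper leaves to the reader.
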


Now suppose there is a subgroup $K$ of $H$ which is normal in
$\Gamma$ and such that $\beta_{\Gamma/K, H/K}$ is nilpotent in
$H^{*}(\Gamma/K, \mathbb{Z})$. Then, clearly, $\beta_{\Gamma/H}$ is
nilpotent in $H^{*}(\Gamma, \mathbb{Z})$ and hence Moore's
conjecture is true for $(\Gamma,H)$. Combining with \ref{nil-conj} and with \ref{Serre} we prove an equivalent version of
Theorem 1.2 of \cite{Al2}
\begin{corollary}\label{finite-K}
Let $(\Gamma, H,K)$ as above where $K$ is of finite index. If
Moore's condition holds for $(\Gamma/K, H/K)$ then Moore's conjecture
is true for $(\Gamma,H)$.
\end{corollary}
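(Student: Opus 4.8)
The plan is to combine Serre's theorem for finite groups (Theorem~\ref{Serre}), the naturality of inflation, and Proposition~\ref{nil-conj}, making precise the one-line sketch given in the paragraph preceding the statement. First I would observe that, since $K$ has finite index in $\Gamma$, the quotient $\Gamma/K$ is a \emph{finite} group. As $K\normal\Gamma$ and $K\leq H$, the image $H/K$ is a normal subgroup of $\Gamma/K$, and the isomorphism theorems give $(\Gamma/K)/(H/K)\cong\Gamma/H\cong\Z_p$; hence $H/K$ has prime index $p$ in $\Gamma/K$. Thus $(\Gamma/K,\,H/K)$ is exactly a pair of the type occurring in Serre's theorem, and by hypothesis it satisfies Moore's condition.

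Next I would run the chain of implications. By Theorem~\ref{Serre} applied to the finite pair $(\Gamma/K,H/K)$, Moore's condition for this pair forces the Bockstein $\beta_{\Gamma/K,\,H/K}$ to be nilpotent in $H^{*}(\Gamma/K,\Z)$. I then claim that $\beta_{\Gamma/H}$ is the inflation of $\beta_{\Gamma/K,\,H/K}$ along the quotient map $\Gamma\to\Gamma/K$: indeed both Bocksteins arise by inflating the \emph{same} class $\widehat{\beta}\in H^{2}(\Z_p,\Z)$ of the non-split extension $1\to\Z\to\Z\to\Z_p\to1$ --- one along $\Gamma\to\Gamma/H$, the other along $\Gamma/K\to(\Gamma/K)/(H/K)\cong\Gamma/H$ --- and by transitivity of inflation $inf_{\Gamma/H}^{\Gamma}=inf_{\Gamma/K}^{\Gamma}\circ inf_{\Gamma/H}^{\Gamma/K}$, so that $\beta_{\Gamma/H}=inf_{\Gamma/K}^{\Gamma}(\beta_{\Gamma/K,\,H/K})$. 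Since inflation is a ring homomorphism $H^{*}(\Gamma/K,\Z)\to H^{*}(\Gamma,\Z)$, it carries the nilpotent element $\beta_{\Gamma/K,\,H/K}$ to a nilpotent element, so $\beta_{\Gamma/H}$ is nilpotent in $H^{*}(\Gamma,\Z)$. Applying Proposition~\ref{nil-conj} to $(\Gamma,H)$ then yields that Moore's conjecture holds for $(\Gamma,H)$ --- including the assertion that Moore's condition holds for $(\Gamma,H)$, so nothing further need be checked there.

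Since the argument is a concatenation of results already available, there is no serious obstacle; the one point that needs genuine care is the identification in the second step, namely verifying that the inflation map $H^{*}(\Gamma/H,\Z)\to H^{*}(\Gamma,\Z)$ factors through $H^{*}(\Gamma/K,\Z)$ compatibly with the isomorphism $(\Gamma/K)/(H/K)\cong\Gamma/H$, and that the distinguished extension class defining the Bockstein is preserved under this identification (it is, because the extension $1\to\Z\to\Z\to\Z_p\to1$ depends only on $\Z_p$, not on the ambient group surjecting onto it). It is exactly this bookkeeping that makes the statement the announced ``equivalent version'' of Theorem~1.2 of \cite{Al2}.
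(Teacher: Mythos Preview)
Your proposal is correct and follows exactly the approach indicated in the paper: apply Theorem~\ref{Serre} to the finite pair $(\Gamma/K,H/K)$ to get nilpotency of $\beta_{\Gamma/K,H/K}$, inflate to obtain nilpotency of $\beta_{\Gamma/H}$, and conclude via Proposition~\ref{nil-conj}. You have simply made explicit the ``clearly'' in the paper's sketch by spelling out the transitivity of inflation and its multiplicativity, which is precisely the right bookkeeping.
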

%
%
For instance, this is the case for the Thompson's group $F$. This
is easily seen from the fact that $F/K \cong \mathbb{Z}^{2}$ where $K$ is the intersection of all finite index subgroups of $F$.

The next result is the key for the proof of Theorem \ref{main1} below. It
extends considerably Corollary \ref{finite-K} by replacing pairs of the form
$(\Gamma/K, H/K)$ where $\Gamma/K$ is finite by pairs
$(\Gamma^{'},H^{'})$ which satisfy Moore's conjecture.
The proof uses a result of Benson and Goodearl.

\begin{proposition}\label{closure}

Suppose that $(\Gamma,H)$ and $(\Gamma^{'},H^{'})$ are two pairs of
a group and a finite index normal subgroup. Suppose also that
$\Gamma/H\cong \Gamma^{'}/H^{'}$, and we have a morphism of short
exact sequences

\begin{displaymath}
\xymatrix{1\ar[r] & H\ar[d]\ar[r] & \Gamma\ar[d]^{\phi}\ar[r] &
\Gamma/H\ar[d]^{\cong}\ar[r] & 1\\ 1\ar[r] & H^{'}\ar[r] &
\Gamma^{'}\ar[r] & \Gamma^{'}/H^{'}\ar[r] & 1.}
\end{displaymath}

Suppose that $(\Gamma^{'},H^ {'})$ satisfies Moore's conjecture.
Then $(\Gamma,H)$ satisfies Moore's conjecture as well.
\end{proposition}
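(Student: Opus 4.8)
The plan is to verify Moore's condition for $(\Gamma,H)$ by a diagram chase, to reduce the projectivity assertion to the case in which $\phi$ is surjective, and then to treat that case by pushing $M$ forward along $\phi$, applying Moore's conjecture for $(\Gamma',H')$, and invoking the Benson--Goodearl theorem to bring the conclusion back up. Throughout we use the reduction of Section~\ref{general}, so $\Gamma/H\cong\Z_p$ and the right vertical map identifies $\Gamma/H$ with $\Gamma'/H'$. An element $g\in\Gamma-H$ of prime order has order $p$, since its image in $\Z_p$ is nonzero and its order divides both the prime and $p$; then $\phi(g)$ has image the nonzero class of $g$ under $\Gamma/H\cong\Gamma'/H'$, so $\phi(g)\in\Gamma'-H'$, while $\phi(g)^{p}=e$ and $\phi(g)\neq e$ force $\phi(g)$ to have order $p$ --- contradicting Moore's condition for $(\Gamma',H')$. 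Thus $(\Gamma,H)$ satisfies Moore's condition. (One also has $\beta_{\Gamma/H}=\phi^{*}(\beta_{\Gamma'/H'})$ in $H^{2}(\Gamma,\Z)$, because $\widehat\beta\in H^{2}(\Z_p,\Z)$ is canonical and $\phi$ is compatible with the projections onto the quotient; but since $\beta_{\Gamma'/H'}$ need not be nilpotent, this remark together with Proposition~\ref{nil-conj} does \emph{not} suffice, and one genuinely has to transport the full conjecture for $(\Gamma',H')$.)

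Now let $M$ be a $\Z\Gamma$-module projective over $\Z H$; by Remark~(3) of the Introduction it suffices to show that the projective dimension of $M$ over $\Z\Gamma$ is finite, i.e.\ that $M$ is projective over $\Z\Gamma$. Factor $\phi$ as $\Gamma\overset{\pi}{\twoheadrightarrow}\Gamma/N\overset{\iota}{\hookrightarrow}\Gamma'$ with $N=\ker\phi\le H$, and set $\bar\Gamma=\Gamma/N$, $\bar H=H/N$. The injective step is routine: since $H'\bar\Gamma=\Gamma'$, Mackey's formula for the double cosets $H'\backslash\Gamma'/\bar\Gamma$ has a single term, so for any $\bar M$ projective over $\Z\bar H$ the module $\mathrm{Ind}_{\bar\Gamma}^{\Gamma'}\bar M$ is projective over $\Z H'$, hence over $\Z\Gamma'$ by Moore's conjecture for $(\Gamma',H')$; restricting to $\Z\bar\Gamma$ (over which $\Z\Gamma'$ is free) and using that $\bar M$ occurs as a summand in the Mackey decomposition of $\mathrm{Res}_{\bar\Gamma}\mathrm{Ind}_{\bar\Gamma}^{\Gamma'}\bar M$ shows $\bar M$ is projective over $\Z\bar\Gamma$, while Moore's condition passes to $(\bar\Gamma,\bar H)$ because $\bar\Gamma-\bar H\subseteq\Gamma'-H'$. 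So $(\bar\Gamma,\bar H)$ satisfies Moore's conjecture, and we are reduced to $\phi=\pi$, i.e.\ $\Gamma'=\Gamma/N$, $H'=H/N$, $N\normal\Gamma$, $N\le H$.

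In that case $M$ is projective over $\Z N$ ($\Z H$ is free over $\Z N$), so $H_{i}(N,M)=0$ for $i>0$, and $M_{N}:=\Z\otimes_{\Z N}M$ is projective over $\Z\bar H$ (the functor $\Z\bar H\otimes_{\Z H}-$ sends $\Z H$-projectives to $\Z\bar H$-projectives); hence $M_{N}$ is projective over $\Z\bar\Gamma$ by Moore's conjecture for $(\bar\Gamma,\bar H)$. Since $M$ is $\Z H$-projective and $[\Gamma:H]=p$ is prime, $M$ is both a $\Z H$-split quotient and a $\Z H$-split submodule of the $\Z\Gamma$-projective module $\mathrm{Ind}_{H}^{\Gamma}\mathrm{Res}_{H}M=\mathrm{Coind}_{H}^{\Gamma}\mathrm{Res}_{H}M$, so it carries a complete projective resolution $T_{\bullet}$ over $\Z\Gamma$; applying $(-)_{N}$, which is exact on $T_{\bullet}$ since these modules are $\Z N$-projective, produces the corresponding complete resolution of $M_{N}$ over $\Z\bar\Gamma$, and this is \emph{contractible} because $M_{N}$ is projective. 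Equivalently, $M$ is projective over $\Z\Gamma$ exactly when $\mathrm{id}_{M}$ lies in the image of the relative trace $\mathrm{tr}_{H}^{\Gamma}\colon\End_{\Z H}(M)\to\End_{\Z\Gamma}(M)$, i.e.\ when $[\mathrm{id}_{M}]=0$ in $\widehat H^{0}(\Z_p,\End_{\Z H}(M))$; functoriality of $(-)_{N}$ carries $[\mathrm{id}_{M}]$ to $[\mathrm{id}_{M_{N}}]=0$, so $[\mathrm{id}_{M}]$ comes from $\widehat H^{0}\bigl(\Z_p,\Hom_{\Z H}(M,\mathfrak{n}_{N}M)\bigr)$, where $\mathfrak{n}_{N}$ denotes the augmentation ideal of $\Z N$.

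The crux is to upgrade this to $[\mathrm{id}_{M}]=0$, and the route I would take is to show that $M$ is in fact flat over $\Z\Gamma$. A change-of-rings spectral sequence for $\Z N\hookrightarrow\Z\Gamma\twoheadrightarrow\Z\bar\Gamma$ --- using that $\Z\Gamma$ is free over $\Z N$ and that $H_{i}(N,M)=0$ for $i>0$ --- gives $\mathrm{Tor}^{\Z\Gamma}_{i}(B,M)=0$ for all $i>0$ and all $\Z\bar\Gamma$-modules $B$; one then has to promote this to arbitrary $\Z\Gamma$-modules $A$, for which the identification $\mathrm{Tor}^{\Z\Gamma}_{n}(A,M)\cong H_{n}(\Z_p,A\otimes_{\Z H}M)$ together with the $2$-periodicity (and positive-degree $p$-torsion) of $\Z_p$-homology are the relevant tools; and finally one applies the Benson--Goodearl theorem that a flat module admitting a complete resolution is projective. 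I expect this last bridge --- from flatness of $M_{N}$ over $\Z\bar\Gamma$ together with $\Z N$-projectivity of $M$ to flatness, and hence projectivity, of $M$ over $\Z\Gamma$ --- to be the main obstacle; the remaining ingredients (Mackey's formula, the $\mathrm{Ind}$--$\mathrm{Coind}$ adjunction in finite index, the existence of the relative complete resolution, and the periodicity of cyclic-group cohomology) are standard.
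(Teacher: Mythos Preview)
Your overall architecture --- factor $\phi$ as a surjection followed by an injection, handle the injective half by induction--restriction, and in the surjective half aim for flatness of $M$ and then invoke Benson--Goodearl --- matches the paper's. The injective case is essentially the same as the paper's (the paper simply observes that $M$ is a direct summand of $\mathrm{Res}_{\bar\Gamma}\Ind_{\bar\Gamma}^{\Gamma'}M$; your Mackey formulation is equivalent). Your explicit verification of Moore's condition is fine and the paper omits it only because it is routine.

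The surjective case, however, has a genuine gap, and it is precisely the one you flag. You apply Moore's conjecture for $(\bar\Gamma,\bar H)$ only to the single module $M_N$, and then try to leverage this through a change-of-rings spectral sequence and the $2$-periodicity of $\Z_p$-homology. The spectral sequence does yield $\mathrm{Tor}^{\Z\Gamma}_i(B,M)=0$ for $\bar\Gamma$-modules $B$, but your proposed promotion to arbitrary $\Z\Gamma$-modules $A$ does not work: the identification $\mathrm{Tor}^{\Z\Gamma}_n(A,M)\cong H_n(\Z_p,A\otimes_{\Z H}M)$ together with periodicity tells you only that these groups are periodic in $n$, not that they vanish --- periodicity is the obstruction, not the cure. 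Your Tate-cohomology paragraph likewise shows that $[\id_M]$ lifts from $\widehat H^{0}\bigl(\Z_p,\Hom_{\Z H}(M,\mathfrak n_N M)\bigr)$ but supplies no mechanism to kill it there, so that detour does not close the argument either.

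The idea you are missing, and which the paper uses, is to apply Moore's conjecture for $(\bar\Gamma,\bar H)$ \emph{once for each test module}, not once for $M_N$ alone. Concretely: for a $\Z$-free $\Z\Gamma$-module $A$, the diagonal module $M\otimes_\Z A$ is $\Z H$-projective (the usual twist $M\otimes_\Z A\cong M\otimes_\Z A_{\mathrm{tr}}$ over $H$), hence $(M\otimes_\Z A)_N=M\otimes_{\Z N}A$ is $\Z\bar H$-projective, hence $\Z\bar\Gamma$-projective by the hypothesis on $(\bar\Gamma,\bar H)$. Now take a $\Z\Gamma$-projective resolution $P_\bullet\to M$ and tensor over $\Z$ with $A$: over $\Z N$ this is a projective resolution of a projective, so split; applying $(-)_N$ keeps it exact and yields a $\Z\bar\Gamma$-projective resolution of the $\Z\bar\Gamma$-projective $M\otimes_{\Z N}A$; applying $(-)_{\bar\Gamma}$ therefore keeps it exact, and the result is precisely $P_\bullet\otimes_{\Z\Gamma}A\to M\otimes_{\Z\Gamma}A$. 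Thus $\mathrm{Tor}^{\Z\Gamma}_n(M,A)=0$ for all $n>0$ and all $\Z$-free $A$. A short exact sequence $0\to Y\to X\to A\to 0$ with $X,Y$ $\Z$-free then gives $\mathrm{Tor}^{\Z\Gamma}_n(M,A)=0$ for $n>1$ and all $A$; a dimension shift $0\to Q\to P\to M\to 0$ with $P$ projective gives $Q$ flat and $\Z H$-projective, hence $\Z\Gamma$-projective by Benson--Goodearl, so $M$ has projective dimension $\le 1$ and is $\Z H$-projective, whence $M$ is $\Z\Gamma$-projective by Corollary~\ref{finiteness} and Remark~\ref{finiteness remark}.
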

The proof of proposition \ref{closure} will be given in section \ref{closure section}

Unlike the case of finite groups, it was observed by D. Benson
(see \cite{Al1}) that the Bockstein that corresponds to a pair $(\Gamma,
H)$ may not be nilpotent even in cases where $\Gamma$ is torsion
free. Indeed, using a strengthened version (proved by Baumslag
Dyer and Heller) of a construction of Kan and Thurston (See
\cite{KT},\cite{BDH}) one can show the existence of a torsion free group
$\Gamma$, with a perfect subgroup $H$ of prime index, with
$\beta_{(\Gamma, H)}$ non-nilpotent.

One of our main points in this paper is to show that Moore's
conjecture may be true for such pairs $(\Gamma,H)$.

In fact in section \ref{main section} we will show that much more is true.

\begin{theorem}\label{main1}

For every pair of groups $(\Gamma_0,H_0)$ (as usual $H_0$ is normal and of
prime index in $\Gamma_0$) there exists a pair of groups
$(\Gamma,H)$ as follows:

\begin{enumerate}

\item

$H$ is normal and of prime index in $\Gamma$

\item

The group $H$ is acyclic and hence the Bockstein
$\beta_{(\Gamma,H)}$ is not nilpotent.

\item
There is an embedding $\phi:\Gamma_0 \longrightarrow
\Gamma$. Moreover, with this embedding $H_0= H\cap
\Gamma_0$ (i.e. $\phi$ induces an isomrphism of $\Gamma_0/H_0
\cong \Gamma/H$.

\item

The pair $(\Gamma,H)$ satisfies Moore's
condition if and only if the pair $(\Gamma_0,H_0)$ does.

\item

The pair $(\Gamma,H)$ satisfies Moore's
conjecture if and only if the pair $(\Gamma_0,H_0)$ does.

\end{enumerate}

\end{theorem}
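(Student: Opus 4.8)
The plan is to construct $(\Gamma,H)$ by embedding $(\Gamma_0,H_0)$ into a carefully chosen extension in which $H$ becomes acyclic. The starting point is the Baumslag--Dyer--Heller strengthening of the Kan--Thurston construction: for any group $Q$ one can functorially embed $Q$ into an acyclic group. However, we need this embedding to be compatible with the prime-index extension structure. First I would apply the acyclic-embedding machinery to $H_0$ to obtain an acyclic group $A$ with $H_0 \hookrightarrow A$, and then I would form the pushout (amalgamated/HNN-type construction) that glues $\Gamma_0$ and $A$ along $H_0$, i.e. set $\Gamma := \Gamma_0 \ast_{H_0} ?$ — but more precisely, since we must keep $\Gamma/H \cong \mathbb Z_p$, I would instead build $\Gamma$ as an extension $1 \to H \to \Gamma \to \mathbb Z_p \to 1$ where $H$ is an acyclic group containing $H_0$ and the generator of $\mathbb Z_p$ acts on $H$ extending its action on $H_0 \subset \Gamma_0$. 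The technical device for this is to use the functoriality of the BDH construction: applying it to the whole short exact sequence $1 \to H_0 \to \Gamma_0 \to \mathbb Z_p \to 1$ gives a morphism of extensions, and one arranges that the middle and left terms become acyclic while the quotient $\mathbb Z_p$ is untouched (one does not apply the construction to $\mathbb Z_p$, which has torsion, but only to $H_0$ and then extends the $\mathbb Z_p$-action). This establishes (1), (2) — acyclicity of $H$ forces $H^*(\Gamma,\mathbb Z) \cong H^*(\mathbb Z_p,\mathbb Z)$ via the Lyndon--Hochschild--Serre spectral sequence, in which $\beta_{(\Gamma,H)}$ is the pullback of the nonzero Bockstein on $\mathbb Z_p$ and hence is non-nilpotent — and (3), since by construction $H \cap \Gamma_0 = H_0$ and $\phi$ is injective.

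For (4), Moore's condition for $(\Gamma,H)$ asks that $\Gamma \setminus H$ contain no elements of prime order; since $\Gamma/H \cong \mathbb Z_p$, an element of prime order in $\Gamma\setminus H$ must have order exactly $p$ and project to a generator of $\mathbb Z_p$. Such an element gives a splitting of the restricted extension over a cyclic subgroup, and one checks that its existence is detected inside $\Gamma_0$ precisely because the BDH embedding is chosen so that any order-$p$ element of $\Gamma$ is conjugate into $\Gamma_0$ (acyclic groups built by the BDH construction are torsion-free, so all the torsion of $\Gamma$ comes from $\Gamma_0$). This gives the equivalence of Moore's conditions.

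The heart of the matter is (5), and here I would invoke Proposition \ref{closure} in both directions. The morphism of short exact sequences
\begin{displaymath}
\xymatrix{1\ar[r] & H_0\ar[d]\ar[r] & \Gamma_0\ar[d]^{\phi}\ar[r] & \mathbb Z_p\ar[d]^{\cong}\ar[r] & 1\\ 1\ar[r] & H\ar[r] & \Gamma\ar[r] & \mathbb Z_p\ar[r] & 1}
\end{displaymath}
is exactly the input of Proposition \ref{closure} with the roles $(\Gamma',H') = (\Gamma,H)$ being built over $(\Gamma_0,H_0)$. Thus if $(\Gamma,H)$ satisfies Moore's conjecture, so does $(\Gamma_0,H_0)$. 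For the converse direction — deducing Moore's conjecture for the \emph{bigger} group $(\Gamma,H)$ from that of the \emph{smaller} $(\Gamma_0,H_0)$ — one cannot apply Proposition \ref{closure} directly along the same arrow; instead the construction must be iterated/structured as a Kropholler-type colimit so that the ``closure operation which comes from Kropholler's construction'' mentioned in the abstract applies. Concretely, the acyclic group $H$ is realized as a directed union of groups to which Proposition \ref{closure} (the closure under the $\mathbf{LH}$-type operation) repeatedly applies with base case $(\Gamma_0,H_0)$, and projectivity passes through the colimit by the Benson--Goodearl argument underlying Proposition \ref{closure}. The main obstacle, and the step requiring the most care, is exactly this: making the BDH acyclic embedding \emph{compatible with the $\mathbb Z_p$-action} and simultaneously \emph{organized as a transfinite Kropholler-style construction}, so that both implications in (5) follow formally from Proposition \ref{closure} and its iterates; the acyclicity and the torsion-control in (2) and (4) are comparatively routine once the right functorial construction is in hand.
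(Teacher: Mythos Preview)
Your overall strategy is close to the paper's, but you abandon the right construction at the wrong moment and introduce an error in the torsion argument.

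\textbf{The construction.} You correctly write down $\Gamma_0 *_{H_0} A$ (with $A$ a BDH acyclic group containing $H_0$) and then discard it, saying ``more precisely, since we must keep $\Gamma/H \cong \mathbb{Z}_p$\ldots''. But this amalgam \emph{already} has the $\mathbb{Z}_p$-quotient: the map $\Gamma_0 \to \Gamma_0/H_0$ together with $A \to 1$ agree on $H_0$ and so define $\phi:\Gamma_1 := \Gamma_0 *_{H_0} A \to \mathbb{Z}_p$; set $H_1 = \ker\phi$. This is exactly what the paper does. Your alternative --- extending the $\mathbb{Z}_p$-action on $H_0$ through the BDH tower functorially --- is not obviously well-posed (the stable letters in the HNN steps $m(G)$ have no canonical $\mathbb{Z}_p$-action), and is in any case unnecessary.

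\textbf{Iteration.} A single step does \emph{not} make $H_1$ acyclic: $H_1$ is the normal closure of $A$ in $\Gamma_1$, not $A$ itself. What one gets is only that $H_1$ contains a mitosis of $H_0$. The paper iterates $\Gamma_0 \hookrightarrow \Gamma_1 \hookrightarrow \Gamma_2 \hookrightarrow \cdots$ and takes the union; then $H = \bigcup H_i$ is mitotic because every finitely generated subgroup of $H$ sits in some $H_i$ and has a mitosis inside $H_{i+1}$. You gesture at iteration near the end, but this point --- why a single amalgam is not enough --- should be explicit.

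\textbf{The error in (4).} Your claim that ``acyclic groups built by the BDH construction are torsion-free'' is false: if $H_0$ has torsion, so does $A = \bigcup m^n(H_0)$, since each $m(G)$ is built from $G\times G$ by HNN extensions. The paper's argument is different and does not need torsion-freeness of $A$: any finite-order element of $\Gamma_1 = \Gamma_0 *_{H_0} A$ is conjugate into one of the factors; but $A \subset H_1$, so a finite-order element \emph{outside} $H_1$ must be conjugate into $\Gamma_0 \setminus H_0$, which violates Moore's condition for $(\Gamma_0,H_0)$.

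\textbf{Part (5).} Here you are essentially right. One direction is Proposition~\ref{closure} applied to the inclusion $\Gamma_0 \hookrightarrow \Gamma$. For the converse at each finite stage, the paper uses the Bass--Serre tree for $\Gamma_1 = \Gamma_0 *_{H_0} A$: vertex stabilizers are conjugates of $\Gamma_0$ and $A$, edge stabilizers are conjugates of $H_0$, so Proposition~\ref{class-close} (the Kropholler-type closure) applies with the only nontrivial input being Moore's conjecture for $(\Gamma_0,H_0)$. Passage to the union uses Proposition~\ref{localness}. Your description of this as a ``Kropholler-type colimit'' with Benson--Goodearl underneath is accurate in spirit; the concrete mechanism is the tree action.
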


Our second main result in this paper is concerning Kropholler's
hierarchy.

In \cite{K2} Kropholler constructed a rather large family of groups,
denoted by $\hf$, which contains for instance all groups with virtual
finite cohomological dimension, all finitely generated soluble
groups, and all linear groups. This family is subgroup closed and closed under
group extensions (See section \ref{Kropholler} or \cite{K2} for the precise definition
of $\hf$). The class $\lhf$ is the class of groups whose finitely
generated subgroups are in $\hf$. In \cite{ACGK} it was shown that Moore's
conjecture holds for all groups in $\lhf$ under the condition that
the module $M$ (in the formulation of the conjecture) is finitely
generated.
Here we prove that the above restriction on $M$ may be removed.

\begin{theorem}\label{LHF thm}
Moore's conjecture holds for any group in $\lhf$.
\end{theorem}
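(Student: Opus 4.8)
The plan is to reduce Theorem \ref{LHF thm} to the finitely generated case established in \cite{ACGK}, by combining Proposition \ref{nil-conj}, Proposition \ref{closure}, and the Benson--Goodearl machinery with the inductive structure of Kropholler's hierarchy. First I would observe that, by the reduction described in Section \ref{general}, it suffices to treat a pair $(\Gamma, H)$ where $H$ is normal of prime index $p$ in $\Gamma$, and $\Gamma \in \lhf$. Since $\lhf$ is subgroup closed, every finitely generated subgroup of $\Gamma$ lies in $\hf$; by \cite{ACGK} Moore's conjecture (for finitely generated modules) holds for all such subgroups, and in particular Moore's condition is satisfied by $(\Gamma, H)$ — this already handles one half of the statement and shows the relevant Bockstein behaves well on finitely generated pieces.

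The core of the argument should be a transfinite induction on the ordinal height of $\Gamma$ in Kropholler's hierarchy, using the operator $\lh$. The base case, groups of finite cohomological dimension (or $\hf$-groups of height zero), is where the result of Benson and Goodearl enters: it provides the key cohomological vanishing/detection statement that lets one conclude projectivity of an arbitrary (not necessarily finitely generated) $\Z\Gamma$-module $M$ from its projectivity over $\Z H$ and control of $\Ext$ against finitely generated modules. For the successor/limit step one uses that a group at height $\alpha$ acts on a contractible complex with stabilizers of strictly smaller height, so that projectivity can be detected on the stabilizer subgroups; here the closure results — Proposition \ref{closure} and the closure under morphisms — are exactly what allow the inductive hypothesis on the smaller-height subgroups to be pushed up to $\Gamma$ without the finite-generation hypothesis on $M$.

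Concretely, I would proceed as follows. (1) State the reduction to $H$ normal of prime index. (2) Record that Moore's condition holds for $(\Gamma, H)$ via \cite{ACGK} applied to finitely generated subgroups. (3) Let $M$ be a $\Z\Gamma$-module projective over $\Z H$; we must show $M$ is $\Z\Gamma$-projective. By Remark (3) after the conjecture it is enough to show $\mathrm{Proj.dim}_{\Z\Gamma}(M) < \infty$, equivalently that $\Ext^n_{\Z\Gamma}(M, -)$ vanishes for $n$ large. (4) Apply the Benson--Goodearl criterion: for $\lhf$-groups, a module has finite projective dimension over $\Z\Gamma$ provided its restriction to every finite (or finite-cohomological-dimension) piece does and a suitable $\Ext$-compatibility holds; this reduces the problem to the finitely generated situation already solved. (5) Feed in Proposition \ref{nil-conj}/\ref{closure} as needed to transfer the conclusion along the inductive filtration of $\lhf$.

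The main obstacle will be step (4): removing the finite-generation hypothesis on $M$. The proof in \cite{ACGK} genuinely uses finite generation to do dimension-shifting and to invoke compactness-type arguments inside $\hf$, whereas an arbitrary $M$ need not be a filtered colimit of finitely generated submodules in a way compatible with projectivity over $\Z H$. The resolution is precisely the Benson--Goodearl result on the structure of modules of type $FP_\infty$ and the behaviour of $\Ext$ under limits over $\lhf$-groups: it yields that if $M$ is projective over $\Z H$ (hence, by the finitely generated case, has uniformly bounded projective dimension "locally" over $\Z\Gamma$), then $M$ itself has finite projective dimension over $\Z\Gamma$. Verifying that the hypotheses of the Benson--Goodearl theorem are met for $(\Gamma, H, M)$ — in particular the boundedness of the local projective dimensions and the cohomological finiteness conditions on $\Gamma$ coming from membership in $\lhf$ — is the delicate point, and that is where most of the work of Section \ref{main section} (or the relevant section) will go.
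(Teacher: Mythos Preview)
Your proposal has the right list of ingredients but assembles them incorrectly, and the central mechanism of the paper's argument is missing.

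The main gap is your step (4). You want to reduce to the finitely-generated-module case of \cite{ACGK} via a Benson--Goodearl result about ``modules of type $FP_\infty$'' and ``uniformly bounded local projective dimensions''. That is not the Benson--Goodearl theorem the paper uses, and the reduction you sketch does not go through: \cite{ACGK} gives Moore's conjecture for $(K,K\cap H)$ only when the module is finitely generated, so knowing the conjecture for finitely generated modules over all finitely generated subgroups $K$ tells you nothing about your arbitrary $M$ restricted to $K$. Proposition~\ref{localness} requires the full conjecture (for all modules) over each finitely generated $K$, which is precisely what has not yet been established. Likewise, Proposition~\ref{closure} is about morphisms of pairs $(\Gamma,H)\to(\Gamma',H')$; it is not a device for pushing projectivity from stabilizer subgroups up to $\Gamma$, and it plays no role in this proof.

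What the paper actually does is a \emph{direct} transfinite induction on the ordinal height in $\hf$, for arbitrary $M$. The key device you are missing is the cellular chain complex trick: at the successor step, $\Gamma$ acts on a finite-dimensional contractible complex $X$ with stabilizers $K_i$ of smaller height; the augmented cellular chain complex $C_*\to\Z$ is exact and each $C_j$ is a sum of permutation modules $\Z[\Gamma/K_i]$. Tensoring over $\Z$ with $M$ gives an exact complex $C_*\otimes M\to M$, and $\Z[\Gamma/K_i]\otimes M\cong \mathrm{Ind}_{K_i}^{\Gamma}M$ is $\Gamma$-projective because, by the inductive hypothesis applied to $(K_i,K_i\cap H)$, $M$ is already $K_i$-projective. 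Thus $M$ has finite projective dimension over $\Z\Gamma$, and Corollary~\ref{finiteness} (not any appeal to \cite{ACGK}) gives projectivity. Only after this is done for all of $\hf$ does Benson--Goodearl enter, and in its simple form (Theorem~\ref{Benson}: $H$-projective plus $\Gamma$-flat implies $\Gamma$-projective): for $\Gamma\in\lhf$, the $\hf$ case now shows $M$ is projective over every finitely generated subgroup, hence $\Gamma$-flat, hence $\Gamma$-projective.
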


We conclude by combining Theorem \ref{main1} and Theorem \ref{LHF thm}

\begin{theorem}\label{still in LHF}
With the notation of Theorem \ref{main1}, the group $\Gamma$ is
in $\lhf$ if and only if $\Gamma_0$ is in $\lhf$.
\end{theorem}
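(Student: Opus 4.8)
The plan is to prove the equivalence by treating the two implications separately; the forward one is immediate, and all the content lies in the backward one, which I would reduce to a statement about the acyclic group $H$ alone. If $\Gamma\in\lhf$, then since $\phi\colon\Gamma_0\hookrightarrow\Gamma$ is an embedding (Theorem~\ref{main1}(3)) and $\lhf$ is subgroup closed, $\Gamma_0\in\lhf$. For the converse, suppose $\Gamma_0\in\lhf$. By Theorem~\ref{main1}(1) there are short exact sequences $1\to H\to\Gamma\to\mathbb{Z}_p\to 1$ and $1\to H_0\to\Gamma_0\to\mathbb{Z}_p\to 1$, whose common quotient $\mathbb{Z}_p$ is finite, hence in $\hf$. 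Since $\lhf$ is subgroup closed, $H_0\le\Gamma_0$ gives $H_0\in\lhf$; and since $\lhf$ is closed under extensions (Kropholler, \cite{K2}), it suffices to prove $H\in\lhf$. Thus the theorem reduces to the single implication: $H_0\in\lhf$ implies $H\in\lhf$, i.e.\ that the acyclic completion of $H_0$ constructed in the proof of Theorem~\ref{main1} does not leave $\lhf$.

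To prove this implication I would unwind the construction of $H$ from $H_0$ given in Section~\ref{main section}. The group $H$ is produced by the Baumslag--Dyer--Heller type machinery \cite{BDH} (the same construction responsible for the non-nilpotency in Theorem~\ref{main1}(2)), which assembles $H$ out of $H_0$ by an iteration of elementary operations --- free products, (restricted) direct powers and wreath products, HNN extensions over subgroups --- together with directed unions. Each such operation preserves membership in $\lhf$: directed unions do because $\lhf$ is tested on finitely generated subgroups and each finitely generated subgroup already lies in a single term of the union (immediate from the definition of $\lhf$); finite direct products do because $\lhf$ is extension closed; a wreath product $A\wr\mathbb{Z}$ with $A\in\lhf$ is the HNN extension of the restricted power $A^{(\mathbb{Z})}\in\lhf$ with stable letter the shift, so it is covered by the next item; and free products and HNN extensions preserve $\lhf$ (Kropholler, \cite{K2}), being instances of the closure operator $\mathbf{H}$ applied to the action on the associated Bass--Serre tree, a $1$-dimensional contractible complex with stabilizers in $\lhf$. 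Feeding these closure properties through the construction by induction yields $H\in\lhf$, and hence $\Gamma\in\lhf$ by the reduction above; this is the sense in which Theorem~\ref{still in LHF} combines Theorem~\ref{main1} with the closure machinery that also underlies Theorem~\ref{LHF thm}.

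The step I expect to be the main obstacle is precisely the second half of the previous paragraph: one must read off from the proof of Theorem~\ref{main1} the exact list of operations used to build $H$ from $H_0$ and verify, step by step, that each of them is among the operations under which $\lhf$ is known to be closed. This is essential rather than cosmetic, since the only intrinsic feature of $H$ recorded in Theorem~\ref{main1} is acyclicity, and acyclicity alone is far too weak --- every group embeds into an acyclic group --- so the argument has to exploit the internal structure of the particular $H$ that the construction produces. Once that structure is made explicit, the verification is routine given the closure properties of $\lhf$ recalled above.
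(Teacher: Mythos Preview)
Your proposal is correct and follows essentially the same approach as the paper: the forward direction is subgroup closure, and the backward direction is handled by checking that each step of the construction stays inside $\lhf$ via its known closure properties. Two small remarks: the paper tracks $\Gamma$ directly rather than reducing to $H$ (slightly cleaner, since the construction builds the chain $\Gamma_0\hookrightarrow\Gamma_1\hookrightarrow\cdots$ with $\Gamma_{i+1}=\Gamma_i*_{H_i}A_i$, so $H$ is not a function of $H_0$ alone), and the actual operations used are finite direct products, HNN extensions, amalgamated free products, and countable unions---no wreath products occur---but the closure properties you cite cover everything that is needed.
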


\begin{corollary}
There exist pairs of groups $(\Gamma, H)$ such that $\Gamma$ is in
$\lhf$ (and hence $(\Ga,H)$ satisfies Moore's conjecture) and the corresponding
Bockstein $\beta_{\Gamma,H}$ is not nilpotent.
\end{corollary}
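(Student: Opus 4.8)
The plan is to feed a well-chosen input pair into the machinery already assembled and then invoke Theorem~\ref{LHF thm}. First I would fix a prime $p$ and take as input a pair $(\Gamma_0,H_0)$ that is as small as possible while still lying inside $\lhf$: for instance $\Gamma_0=\Z$ with $H_0=p\Z$, or even a finite group $\Gamma_0$ of order $p$ with $H_0=\{e\}$. In either case $\Gamma_0$ is polycyclic (respectively finite), hence certainly in $\hf\subseteq\lhf$, and when $\Gamma_0$ is torsion free the pair $(\Gamma_0,H_0)$ trivially satisfies Moore's condition.

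Next I would apply Theorem~\ref{main1} to $(\Gamma_0,H_0)$ to produce a pair $(\Gamma,H)$ with $H$ normal of prime index $p$ in $\Gamma$, with $H$ acyclic --- so that, by item~(2) of that theorem, the Bockstein $\beta_{(\Gamma,H)}$ is not nilpotent --- and with an embedding $\Gamma_0\hookrightarrow\Gamma$ inducing the isomorphism $\Gamma_0/H_0\cong\Gamma/H$. By item~(4) of Theorem~\ref{main1}, since $(\Gamma_0,H_0)$ satisfies Moore's condition, so does $(\Gamma,H)$.

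Then Theorem~\ref{still in LHF} applies: since $\Gamma_0\in\lhf$, the group $\Gamma$ is in $\lhf$ as well. Finally Theorem~\ref{LHF thm} asserts that Moore's conjecture holds for the group $\Gamma$, hence for every pair $(\Gamma,K)$ satisfying Moore's condition and in particular for $(\Gamma,H)$. This exhibits $(\Gamma,H)$ as a pair of the required form: $\Gamma$ lies in $\lhf$, Moore's conjecture holds for $(\Gamma,H)$, and $\beta_{(\Gamma,H)}$ is non-nilpotent.

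There is no substantial obstacle here beyond bookkeeping; the entire content has been pushed into Theorems~\ref{main1}, \ref{LHF thm} and \ref{still in LHF}. The only point that needs a moment's care is that ``Moore's conjecture holds for the group $\Gamma$'' in Theorem~\ref{LHF thm} is a statement about all admissible pairs, so one must first check that the specific pair $(\Gamma,H)$ satisfies Moore's condition --- which is exactly item~(4) of Theorem~\ref{main1} --- before concluding that it satisfies Moore's conjecture.
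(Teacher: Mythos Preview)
Your argument is correct and matches the paper's own route: choose any pair $(\Gamma_0,H_0)$ in $\lhf$ with $\Gamma_0/H_0$ cyclic of prime order and satisfying Moore's condition, apply Theorem~\ref{main1} (equivalently Proposition~\ref{list-pro}) to obtain $(\Gamma,H)$ with $H$ acyclic, and then read off the conclusions from Theorems~\ref{still in LHF} and~\ref{LHF thm}. One small correction: your alternative suggestion of $\Gamma_0$ cyclic of order $p$ with $H_0=\{e\}$ must be discarded, since that pair fails Moore's condition (every nontrivial element of $\Gamma_0\setminus H_0$ has order $p$); the choice $(\Z,p\Z)$ works.
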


On the other hand, applying Theorem \ref{main1} to the Thompson's group we obtain

\begin{corollary}
There exist pairs of groups $(\Gamma, H)$ such that $\Gamma$ in not
in $\lhf$, $(\Gamma, H)$ satisfies Moore's conjecture and the
corresponding Bockstein $\beta_{\Gamma,H}$ is not nilpotent.
\end{corollary}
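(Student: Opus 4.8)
The plan is to feed Thompson's group $F$, equipped with a normal subgroup of prime index, into Theorem \ref{main1}: part (2) of that theorem produces the non-nilpotent Bockstein automatically, part (5) transports Moore's conjecture, and Theorem \ref{still in LHF} transports non-membership in $\lhf$. So the only real work is to exhibit a suitable input pair $(\Gamma_0,H_0)=(F,H_0)$ and to record the two external facts about $F$ that are needed.

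First I would fix the input pair. Since $F^{\mathrm{ab}}\cong\Z^2$, there is a surjection $F\twoheadrightarrow\Z_p$ for any prime $p$; let $H_0$ be its kernel, so $H_0$ is normal of index $p$ in $F$. As $F$ is torsion free, Moore's condition holds for $(F,H_0)$ trivially, and I claim Moore's conjecture holds for $(F,H_0)$ as well. Indeed, let $K$ be the intersection of all finite-index subgroups of $F$; then $K\subseteq H_0$ (because $H_0$ has finite index) and $F/K\cong\Z^2$, so the quotient homomorphism $F\twoheadrightarrow F/K$ gives a morphism of short exact sequences from $1\to H_0\to F\to\Z_p\to1$ to $1\to H_0/K\to F/K\to\Z_p\to1$ which is the identity on the common quotient $\Z_p$, with $H_0/K$ normal of index $p$ in $F/K\cong\Z^2$. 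Since $\Z^2\in\lhf$, Theorem \ref{LHF thm} gives Moore's conjecture for $\Z^2$, hence for $(F/K,H_0/K)$; Proposition \ref{closure} then yields Moore's conjecture for $(F,H_0)$. (This recovers the assertion, made after Corollary \ref{finite-K}, that Moore's conjecture holds for $F$.)

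Next I would use the fact that $F\notin\lhf$. Since $F$ is finitely generated, $F\in\lhf$ would force $F\in\hf$; but $F$ is of type $FP_\infty$ while $\mathrm{cd}(F)=\infty$ (it contains free abelian subgroups of arbitrarily large rank), and it is known that a torsion-free $\hf$-group of type $FP_\infty$ has finite cohomological dimension, a contradiction. Now apply Theorem \ref{main1} to $(\Gamma_0,H_0)=(F,H_0)$ to obtain a pair $(\Gamma,H)$ with $H$ normal of prime index in $\Gamma$, with $H$ acyclic (so that $\beta_{(\Gamma,H)}$ is not nilpotent, by part (2)), and with $(\Gamma,H)$ satisfying Moore's conjecture if and only if $(F,H_0)$ does, i.e.\ $(\Gamma,H)$ does, by the previous paragraph. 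Finally, Theorem \ref{still in LHF} gives $\Gamma\in\lhf\iff F\in\lhf$, so $\Gamma\notin\lhf$. This $(\Gamma,H)$ is the desired pair.

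The deduction itself is purely formal once Theorems \ref{main1}, \ref{LHF thm} and \ref{still in LHF} are in hand; the points I would be most careful to state and cite precisely in the write-up are the two external inputs about Thompson's group $F$ — that $F$ lies outside $\lhf$, and that $(F,H_0)$ nonetheless satisfies Moore's conjecture through the closure Proposition \ref{closure}.
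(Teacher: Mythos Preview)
Your proposal is correct and follows essentially the same route as the paper: take Thompson's group $F$ with a normal subgroup $H_0$ of prime index, verify that $(F,H_0)$ satisfies Moore's conjecture and that $F\notin\lhf$, then feed this into Theorem \ref{main1} and Theorem \ref{still in LHF}. The only small difference is in how you justify Moore's conjecture for $(F,H_0)$: the paper observes that the Bockstein of the quotient pair in $F/K\cong\Z^2$ is nilpotent (since $\Z^2$ has finite cohomological dimension) and invokes Proposition \ref{nil-conj}, whereas you instead invoke Proposition \ref{closure} together with Theorem \ref{LHF thm} applied to $\Z^2$; both arguments are valid and amount to the same thing.
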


The paper is organized as follows: In section \ref{general} we present some
general results which are essential for the rest of the paper. In
particular we prove the reduction to normal subgroups of prime index. In section \ref{closure section}
we prove Theorem \ref{closure}. In section \ref{Kropholler} we recall Kropholler's
construction and prove Theorem \ref{LHF thm}. In section \ref{main section} we prove Theorem \ref{main1}.
\end{section}

\begin{section}{Reductions, generalities, and the case of finite groups}\label{general}
We start with the reduction mentioned in the introduction:
\begin{lemma}
Suppose that Moore's conjecture is true for every pair of groups
$(\Gamma,H)$ (which satisfies Moore's condition) such that $H$ is a normal subgroup of $\Gamma$ of prime
index.
Then Moore's conjecture is true for any pair of groups $(\Gamma,H)$ which satisfies Moore's condition.
\end{lemma}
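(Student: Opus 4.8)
The plan is to reduce the general case to the case of a normal subgroup of prime index in two steps: first from arbitrary finite index subgroups to normal finite index subgroups, then from normal finite index subgroups to normal subgroups of prime index, by filtering through a chief-type series. So suppose Moore's conjecture holds for every pair $(\Gamma,H)$ satisfying Moore's condition with $H$ normal of prime index in $\Gamma$, and let $(\Gamma,H)$ be an arbitrary pair satisfying Moore's condition, with $[\Gamma:H]=n$.

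\textbf{Step 1: reduce to $H$ normal.} Let $N=\bigcap_{g\in\Gamma} gHg^{-1}$ be the normal core of $H$ in $\Gamma$; it is normal in $\Gamma$ and of finite index (it is the kernel of the permutation action of $\Gamma$ on $\Gamma/H$), and $N\subseteq H$. I claim it suffices to prove the conjecture for the pair $(\Gamma,N)$. Indeed, suppose $M$ is a $\Z\Gamma$-module projective over $\Z H$. Since $[H:N]<\infty$ and the statement of remark (3) in the introduction applies to the pair $(H,N)$, projectivity of $M$ over $\Z H$ gives $\mathrm{Proj.dim}_{\Z N}(M)<\infty$; but a module of finite projective dimension over $\Z N$ which is moreover projective over the larger ring... more carefully: $M$ projective over $\Z H$ implies $M$ is a direct summand of a free $\Z H$-module, hence (restricting) a direct summand of a free $\Z N$-module, so $M$ is projective over $\Z N$. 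One then checks that Moore's condition for $(\Gamma,H)$ implies Moore's condition for $(\Gamma,N)$: an element of prime order in $\Gamma-N$ which happened to lie in $H$ would be conjugate into... here one must use that $N$ is the core, so any prime-order element outside $N$ lies outside some conjugate $gHg^{-1}$, and prime-order elements of $\Gamma-H$ are excluded by hypothesis; combining over all conjugates handles $\Gamma - N$. Granting this, the conjecture for $(\Gamma,N)$ gives $M$ projective over $\Z\Gamma$, as required. Thus we may and do assume $H\normal\Gamma$.

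\textbf{Step 2: reduce to prime index via a composition series of $\Gamma/H$.} Now $\Gamma/H$ is a finite group, so it admits a subnormal series $\{e\}=Q_0\normal Q_1\normal\cdots\normal Q_r=\Gamma/H$ in which each quotient $Q_{i}/Q_{i-1}$ is cyclic of prime order (for instance a composition series, whose factors are simple, refined so that each factor is prime cyclic — or simply iterate taking a maximal proper subgroup, then use solvability of the individual... actually a composition series already has simple factors; to get prime cyclic factors intersect with a chief series or just note any finite group has a subnormal series with prime cyclic factors by repeatedly choosing subgroups of prime index in a $p$-subgroup-by-... — in any case such a refinement exists). Pulling back along $\Gamma\twoheadrightarrow\Gamma/H$, let $\Gamma=\Gamma_0\supseteq\Gamma_1\supseteq\cdots\supseteq\Gamma_r=H$ be the preimages, so each $\Gamma_i\normal\Gamma_{i-1}$ with $[\Gamma_{i-1}:\Gamma_i]$ prime. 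Given $M$ projective over $\Z H=\Z\Gamma_r$, I propagate projectivity up the chain: assuming $M$ is projective over $\Z\Gamma_i$, I want it projective over $\Z\Gamma_{i-1}$. This is exactly the hypothesis applied to the pair $(\Gamma_{i-1},\Gamma_i)$, provided that pair satisfies Moore's condition. Moore's condition for $(\Gamma_{i-1},\Gamma_i)$ is the assertion that $\Gamma_{i-1}-\Gamma_i$ contains no element of prime order; since $\Gamma_{i-1}-\Gamma_i\subseteq\Gamma-H$ (because $\Gamma_i\supseteq H$ gives $\Gamma_{i-1}\setminus\Gamma_i\subseteq\Gamma\setminus H$), this follows immediately from Moore's condition for $(\Gamma,H)$. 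After $r$ steps we conclude $M$ is projective over $\Z\Gamma_0=\Z\Gamma$.

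\textbf{Main obstacle.} The genuinely delicate point is the verification in Step 1 that Moore's \emph{condition} passes from $(\Gamma,H)$ to $(\Gamma,N)$ for $N$ the normal core — one needs that every prime-order element of $\Gamma$ lying outside $N$ already lies outside some conjugate of $H$, which is where the specific choice $N=\bigcap_g gHg^{-1}$ (rather than an arbitrary normal subgroup of $H$) is essential, together with the fact that $x$ has prime order iff $gxg^{-1}$ does; Step 2's condition-chase is then formal. The projectivity bookkeeping in both steps is routine once one uses that restriction of a projective module along a subring inclusion, combined with finite index so that induction/restriction are well behaved, preserves projectivity (direct-summand-of-free is stable under restriction), and that the prime-index hypothesis is invoked once per link of the chain.
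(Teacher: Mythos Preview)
Your Step~1 is fine and matches the paper's reduction to the normal core. The gap is in Step~2: you assert that every finite group $\Gamma/H$ admits a subnormal series with cyclic factors of prime order, and your parenthetical hedging (``use solvability of the individual\ldots\ in any case such a refinement exists'') papers over the fact that this is precisely the characterization of solvable groups. A non-solvable quotient such as $\Gamma/H \cong A_5$ has no such series---its only composition factor is $A_5$ itself---so the chain $\Gamma = \Gamma_0 \supseteq \Gamma_1 \supseteq \cdots \supseteq \Gamma_r = H$ with each step normal of prime index simply does not exist, and your induction cannot be run.

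The paper circumvents this by passing to the Sylow $p$-subgroups $\bar P_i$ of $\Gamma/H$ and their preimages $P_i$ in $\Gamma$. Each $P_i/H$ \emph{is} a $p$-group, hence nilpotent, and so does admit a chain with prime-index normal steps; your inductive argument then correctly shows $M$ is projective over each $P_i$. To climb from the $P_i$ back up to $\Gamma$ one cannot use a subnormal chain (there need not be one), so the paper instead uses the transfer: the composite $cor^{P_i}_\Gamma \circ res^\Gamma_{P_i}$ acts on $\mathrm{Ext}^1_\Gamma(M,N)$ as multiplication by $[\Gamma:P_i]$, and since the indices $[\Gamma:P_i]$ are coprime as $p_i$ ranges over the prime divisors of $|\Gamma/H|$, one concludes $\mathrm{Ext}^1_\Gamma(M,N) = 0$ for every $N$, hence $M$ is $\Gamma$-projective. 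This Sylow-plus-transfer step is the missing ingredient in your argument.
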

\begin{proof}
Let $(\Gamma,H)$ be any pair of groups which satisfies Moore's
condition. We will show that under the assumption of the lemma,
the pair $(\Ga,H)$ satisfies Moore's conjecture.
Let $$\bar{H}=core(H)=\bigcap_{g\in \Gamma}gHg^{-1}.$$
Then $\bar{H}$ is a normal subgroup of $\Gamma$ of finite index.
It is easy to see that if $(\Gamma,H)$ satisfies Moore's
condition, the same is true for $(\Gamma,\bar{H})$. We thus assume
without loss of generality that $H$ is normal in $\Gamma$ (since
if the conjecture is true for $(\Gamma,\bar{H})$, it is clearly
true for $(\Gamma,H)$).

Let $M$ be a $\Gamma$-module which is projective over $H$. We would
like to prove that $M$ is projective over $\Gamma$. Consider the
finite group $F=\Gamma/H$. For every prime number $p_i$ which
divides the order $n$ of $F$, let $\bar{P_i}$ be a $p_i$-Sylow
subgroup of $F$, and let $P_i$ be its inverse image in $\Gamma$.
Since $P_i/H=\bar{P_i}$ is a finite $p_i$ group, there is a finite chain of
subgroups
$$H=H_0< H_1<\ldots<P_i$$ such that
$H_{j+1}/H_j\cong \mathbb{Z}_p$. By induction, it follows now
from our assumption that $M$ is projective over every $H_j$, and
therefore also over $P_i$ for every $i$. Let $N$ be any
$\Gamma$-module. The map $cor^{P_i}_{\Gamma}\cdot
res^{\Gamma}_{P_i} :Ext^1_{\Gamma}(M,N)\rightarrow
Ext^1_{\Gamma}(M,N)$ is multiplication by $|F/\bar{P_i}|$. Thus
all the numbers $|F/\bar{P_i}|$ annihilates $Ext^1_{\Gamma}(M,N)$.
These numbers are coprime (since $\bar{P_i}$ is a $p_i$-Sylow
subgroup of $F$), so $Ext^1_{\Gamma}(M,N)=0$. Since $N$ was arbitrary,
this means that $M$ is projective over $\Gamma$ as required.
\end{proof}

We assume therefore that $\Gamma/H$ is cyclic of prime order $p$.
Let us denote a generator of $\Gamma/H$ by $x$. We shall use some well known
results on the cohomology of finite cyclic groups. For a proof of
these, see for example the book of Mac Lane, \cite{Maclane}. We
know that the second cohomology group $H^2(\Gamma/H,\mathbb{Z})$
is cyclic of order $p$. A generator $\hat{\beta}$ of this group is
given by the exact sequence

\begin{displaymath} \label{bockstein}
\hat{\beta} : 1 \rightarrow \Z\rightarrow \Z\Gamma/H\rightarrow
\Z\Gamma/H\rightarrow \Z\rightarrow 1
\end{displaymath}

where the first map is given by the inclusion $1\mapsto
\sum_{i=0}^{p-1}x^i$, the second map is the $\Gamma/H$ map which
sends 1 to $1-x$, and the third map is the natural projection (the augmentation map).
Consider the inflation of $\hat{\beta}$ to $\Gamma$,
$\beta=inf^{\Gamma}_{\Gamma/H}(\hat{\beta})$. As an exact sequence,
$\beta$ is represented by the same exact sequence as $\hat{\beta}$ 
where the modules are now considered as $\Gamma$-modules. Notice that as
an exact sequence of $\Z$-modules, $\beta$ splits, and therefore if
$M$ is any $\Gamma$-module, then $\beta\otimes_{\Z} M$ is also
exact, and we can consider $\beta_M=\beta\otimes_{\Z} M\in
Ext^2_{\Gamma}(M,M)$.
\begin{remark}
The description of $\beta$ given above is different from the one given in the introduction.
The fact that the definitions are equivalent follows easily from results which appear in \cite{Maclane}.
\end{remark}
As we shall soon see, $\beta$ and $\beta_M$ play a decisive role in
what follows. Recall first that if $N$ is a $\Gamma/H$-module, then
for every natural number $i>0$ the cup product with
$\hat{\beta}$ defines an isomorphism $H^i(\Gamma/H,N)\rightarrow
H^{i+2}(\Gamma/H,N)$. If $M$ is also a $\Gamma/H$ module, then one
can define in the obvious way the cocycle $\hat{\beta}_M\in
Ext^2_{\Gamma/H}(M,M)$ such that $inf{\hat{\beta_M}}=\beta_M$, and
cup product with $\hat{\beta}_M$ defines an isomorphism
$Ext^i_{\Gamma/H}(M,N)\rightarrow Ext^{i+2}_{\Gamma/H}(M,N)$ for every
$i$.

The nilpotency of $\beta_M$ plays a similar role for infinite groups
as for finite groups

\begin{proposition} (see \cite{QV} and \cite{Carlson} for the finite case)

Let $\Gamma$ be a group, and let $H$ be a normal subgroup of prime
index such that $(\Gamma,H)$ satisfies Moore's condition. Let $M$
be a $\Gamma$-module. Then $\beta_M\in Ext^2_{\Gamma}(M,M)$ is
nilpotent if and only if it is true that any $\zeta\in
Ext^*_{\Gamma}(M,M)$ with nilpotent restriction to $H$ is
nilpotent.

\end{proposition}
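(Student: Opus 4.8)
The statement to prove is an ``if and only if'': $\beta_M$ is nilpotent in $\Ext^*_\Gamma(M,M)$ precisely when every $\zeta \in \Ext^*_\Gamma(M,M)$ whose restriction to $H$ is nilpotent is itself nilpotent. The plan is to exploit the explicit description of $\beta$ as the inflation of a generator $\hat\beta$ of $H^2(\Gamma/H,\Z) \cong \Z_p$, together with the standard periodicity isomorphism given by cup product with $\hat\beta$ over the cyclic group $\Gamma/H$.

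\emph{The easy direction} ($\Leftarrow$): Here one shows directly that $\beta_M$ has nilpotent restriction to $H$, and then invokes the hypothesis. Since $\beta$ is inflated from $\Gamma/H$, its restriction $\mathrm{res}^\Gamma_H(\beta) \in H^2(H,\Z)$ factors through the restriction of $\hat\beta$ along the trivial map $H \to \{e\} = H/H$ sitting inside $\Gamma/H$; hence $\mathrm{res}^\Gamma_H(\beta) = 0$, so $\mathrm{res}^\Gamma_H(\beta_M) = 0 \in \Ext^2_H(M,M)$ as well. Applying the hypothesis to $\zeta = \beta_M$ gives that $\beta_M$ is nilpotent.

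\emph{The hard direction} ($\Rightarrow$): Assume $\beta_M^N = 0$ for some $N$, and let $\zeta \in \Ext^*_\Gamma(M,M)$ have nilpotent restriction to $H$; I want to deduce $\zeta$ is nilpotent. The key tool is a Lyndon--Hochschild--Serre spectral sequence argument for the extension $1 \to H \to \Gamma \to \Gamma/H \to 1$ with coefficients in $\iHom(M,M)$ (or more precisely a comparison of $\Ext_\Gamma$, $\Ext_H$, and the $\Gamma/H$-cohomology). The relevant structural fact is: the kernel of the restriction map $\Ext^*_\Gamma(M,M) \to \Ext^*_H(M,M)$ is, up to a nilpotent ideal coming from higher filtration, controlled by $H^*(\Gamma/H, -)$, and over the cyclic group $\Gamma/H$ everything in positive degree is ``$\hat\beta$-periodic'' --- i.e. a power of $\zeta$ landing in the kernel of restriction, after passing to the associated graded of the LHS filtration, becomes divisible by (a power of) $\beta$. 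Concretely: replacing $\zeta$ by a power, I may assume $\mathrm{res}^\Gamma_H(\zeta) = 0$; then $\zeta$ lies in positive LHS filtration, and using periodicity on the $E_\infty$-page one writes the leading term of some power $\zeta^k$ as $\beta_M \cdot \xi$ for suitable $\xi$. Iterating and using $\beta_M^N = 0$ forces a power of $\zeta$ into filtration degree exceeding any bound, which (since $\Gamma/H$ is finite, the filtration on each $\Ext^n_\Gamma(M,M)$ has length $< p$) forces that power to vanish.

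The main obstacle I anticipate is making the periodicity step rigorous at the level of $\Ext_\Gamma$ rather than just on the $E_2$-page of the spectral sequence: cup product with $\hat\beta$ gives genuine isomorphisms $\Ext^i_{\Gamma/H}(M,N) \xrightarrow{\sim} \Ext^{i+2}_{\Gamma/H}(M,N)$ (as recalled just before the statement), but transporting this multiplicatively through the LHS filtration requires care about the ring structure on the spectral sequence and about which $E_\infty$-terms actually survive. I would handle this by working with the \emph{multiplicative} structure of the LHS spectral sequence for $\Ext$, noting that $\beta_M$ is a permanent cycle detecting the periodicity generator, so that multiplication by $\beta_M$ on $E_\infty$ is an isomorphism in the appropriate range of filtration degrees; an element of positive filtration whose image under iterated multiplication by $\beta_M$ must eventually fall into filtration $\geq p$ then dies. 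The finite-group cases in \cite{QV} and \cite{Carlson} give the template; the only new input needed is that none of these purely homological arguments used finiteness of $\Gamma$ itself --- they used only finiteness of $\Gamma/H$ and the periodicity of its cohomology, both of which persist verbatim.
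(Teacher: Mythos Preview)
Your proposal is correct and follows essentially the same approach as the paper: both directions match, and for the hard direction both you and the paper use the Lyndon--Hochschild--Serre spectral sequence together with the periodicity induced by $\beta_M$ on its pages (what the paper cites explicitly as the Quillen--Venkov lemma) to show that some power of $\zeta$ is divisible by $\beta_M$, then iterate using nilpotency of $\beta_M$. The paper is terser because it simply names the Quillen--Venkov lemma (multiplication by $\beta_M$ is an epimorphism $E^r_{s,t}\to E^r_{s+2,t}$, and an isomorphism for $s\ge r$) and observes that its proof nowhere uses finiteness of $\Gamma$, whereas you unpack the filtration-climbing argument by hand; but the substance is the same.
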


\begin{proof}

One direction is easy- if every element $\zeta$ with nilpotent
restriction to $H$ is nilpotent, then in particular $\beta$, whose
restriction to $H$ is zero, is nilpotent. The proof of the other
direction is exactly the same as the proof in case the group $\Ga$
is finite, and uses the Quillen-Venkov lemma. The Quillen-Venkov lemma says
that if we consider the LHS spectral sequence
$$E^2_{s,t}=H^s(\Gamma/H,Ext^t_H(M,M))\converges
Ext^{s+t}_{\Gamma}(M,M),$$ then multiplication by $\beta_M$ defines
an epimorphism $E^r_{s,t}\rightarrow E^r_{s+2,t}$ which is also an
isomorphism in case $s\geq r$. Then it can easily be shown that if
$\zeta\in Ext^*_{\Gamma}(M,M)$ has nilpotent restriction to $H$,
then there exist an $i$ such that $\zeta^i$ is divisible by
$\beta_M$, and this proves the claim. The proof of Quillen-Venkov
lemma for the case of finite groups can also be found in
\cite{Evens}. Since the proof of the lemma does not really depend on
the finiteness of the group $\Gamma$, our claim follows.

\end{proof}

We can now prove Proposition \ref{nil-conj}. In fact we present a stronger
statement using $\beta_M$ rather than $\beta$.

\begin{proposition}\label{nilpotency}

Let $\Gamma$ and $H$ be as above. Let $M$ be a $\Gamma$-module which
is projective as an $H$-module. Then the following conditions are
equivalent: \\a. $M$ is projective over $\Gamma$. \\b. $\beta_M=0$ \\c.
$\beta_M$ is nilpotent.
\end{proposition}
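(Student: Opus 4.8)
The plan is to prove the cycle of implications (a) $\Rightarrow$ (b) $\Rightarrow$ (c) $\Rightarrow$ (a), since (a) $\Leftrightarrow$ (b) is the heart of the matter and (b) $\Rightarrow$ (c) is trivial. For (a) $\Rightarrow$ (b): if $M$ is projective over $\Gamma$, then $\Ext^2_{\Gamma}(M,M)=0$, so $\beta_M=0$. For (c) $\Rightarrow$ (a), which is the substantial step: assume $\beta_M$ is nilpotent, so $\beta_M^n=0$ for some $n$. I would feed $\beta$ to itself $n$ times, i.e. splice together $n$ copies of the four-term exact sequence representing $\beta$ (Yoneda composition) to obtain an exact sequence
$$\beta^{\star n}: \quad 0 \to \Z \to P_{2n-1} \to \cdots \to P_0 \to \Z \to 0$$
of $\Gamma$-modules representing $\beta^n \in H^{2n}(\Gamma,\Z)$, where each $P_i$ is induced from $H$ (a direct sum of copies of $\Z\Gamma/H$-type modules), hence free over $\Z$ and in particular $\Z$-split in each spot. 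Tensoring over $\Z$ with $M$ keeps it exact and yields an exact sequence representing $\beta_M^n \in \Ext^{2n}_{\Gamma}(M,M)$ whose middle terms $P_i \otimes_{\Z} M$ are, by the projection formula / Frobenius reciprocity, direct sums of copies of $\Ind_H^{\Gamma}(\Res_H M)$; since $M$ is projective over $H$, these middle terms are projective over $\Gamma$.

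Now I would use the standard dimension-shifting argument: the vanishing $\beta_M^n=0$ in $\Ext^{2n}_{\Gamma}(M,M)$ means precisely that the extension $\beta_M^{\star n}$ represents the zero class, so it is equivalent (as a Yoneda extension) to a split one. Splitting $\beta_M^{\star n}$ at successive spots shows that $M$ has a projective resolution in which, after $2n$ steps, one can read off that the relevant syzygy $\Omega^{2n}M$ is a direct summand of a projective module, hence $M$ admits a finite projective resolution over $\Z\Gamma$, i.e. $\mathrm{Proj.dim}_{\Z\Gamma}(M) < \infty$. By Remark 3 in the introduction (Serre-type argument, see \cite{Swan}), finite projective dimension over $\Z\Gamma$ together with $M$ being $\Z H$-projective forces $\mathrm{Proj.dim}_{\Z\Gamma}(M) = \mathrm{Proj.dim}_{\Z H}(M) = 0$, so $M$ is projective over $\Gamma$. (Alternatively, one argues directly: from the exact sequence with projective middle terms and $M$ at both ends splitting, $M$ is a direct summand of a module built from projectives and is therefore projective — this is exactly the mechanism in Carlson's and Quillen--Venkov's treatment of the finite case.)

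I expect the main obstacle to be the careful bookkeeping in identifying the Yoneda power $\beta^{\star n}$ and checking that $P_i \otimes_{\Z} M$ really is $\Z\Gamma$-projective when $M$ is only assumed $\Z H$-projective: one must invoke that $\Ind_H^{\Gamma}$ of a projective $\Z H$-module is projective over $\Z\Gamma$ (finite index, so induction equals coinduction) and that $\Ind_H^{\Gamma}(\Res_H M) \cong (\Z\Gamma/H) \otimes_{\Z} M$ as $\Gamma$-modules via the projection formula. The other delicate point is the passage from "$\beta_M^n = 0$ in $\Ext^{2n}$" to an actual module-theoretic splitting yielding finite projective dimension; this is routine Yoneda/dimension-shifting but should be spelled out, and it is the only place the finiteness of the group is genuinely irrelevant — everything goes through verbatim for infinite $\Gamma$. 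Finally, the last descent step relies on Remark 3, so I would state explicitly that we are using it rather than reproving Serre's lemma.
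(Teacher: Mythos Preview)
Your argument is correct, but it follows a genuinely different route from the paper's. The paper proves (c)$\Rightarrow$(a) via the Lyndon--Hochschild--Serre spectral sequence: since $M$ is $H$-projective, the spectral sequence $E_2^{s,t}=H^s(\Gamma/H,\Ext^t_H(M,N))\Rightarrow \Ext^{s+t}_\Gamma(M,N)$ collapses, so $\Ext^n_\Gamma(M,N)\cong H^n(\Gamma/H,\Hom_H(M,N))$; then periodicity of the cohomology of the finite cyclic group $\Gamma/H$ makes cup product with $\beta_M$ an isomorphism $\Ext^n_\Gamma(M,N)\to\Ext^{n+2}_\Gamma(M,N)$ for all $n>0$, and nilpotency forces these groups to vanish for every $N$.

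Your approach instead builds the explicit Yoneda $2n$-extension $\beta^{\star n}\otimes_{\Z}M$, observes via the projection formula that its middle terms $\Z[\Gamma/H]\otimes_{\Z}M\cong \Ind_H^{\Gamma}\mathrm{Res}_H M$ are $\Gamma$-projective, and then uses that the class vanishing forces $M$ to have finite projective dimension (followed by the Serre/Swan descent of Remark~3). This avoids spectral sequences entirely and is closer in spirit to Chouinard's original module-theoretic argument. In fact your parenthetical alternative is the sharpest version: when an exact sequence $0\to M\to P_{2n-1}\to\cdots\to P_0\to M\to 0$ with all $P_i$ projective represents the zero class, the connecting map identifies that class with $[\mathrm{id}_M]\in \Hom(M,M)/\mathrm{im}\,\Hom(P_{2n-1},M)$, so vanishing means $M\hookrightarrow P_{2n-1}$ splits and $M$ is projective outright---no appeal to Remark~3 is needed. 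The paper's route is shorter once the spectral sequence is set up and makes the role of the cyclic quotient transparent; yours is more elementary and self-contained.
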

\begin{proof}
That a $\determines$ b $\determines$ c is trivial, since if $M$ is
projective over $\Gamma$ then $Ext^2_{\Gamma}(M,M)=0$. Suppose that
$\beta_M$ is nilpotent. Let $N$ be any $\Gamma$-module. Consider the
LHS spectral sequence
$$E^2_{s,t}=H^s(\Gamma/H,Ext^t_H(M,N))\converges
Ext^{s+t}_{\Gamma}(M,N).$$ Since $M$ is projective over $H$, this
sequence collapses at the $E^2$ term, and thus
$Ext^n_{\Gamma}(M,N)=H^n(\Gamma/H,Hom_H(M,N))$. It follows that
multiplication by $\beta_M$ is an isomorphism
$Ext^n_{\Gamma}(M,N)\rightarrow Ext^{n+2}_{\Gamma}(M,N)$ for every
$n>0$. Since $\beta_M$ is nilpotent, it follows that
$Ext^n_{\Gamma}(M,N)=0$ for every $n>0$ and our claim
follows.\end{proof}

\begin{remark}

The proof above mimics the proof of Chouinard's theorem in
the case of finite groups.

\end{remark}

Using the proposition above, we can derive two easy corollaries:

\begin{corollary}

Suppose that $\Gamma$ and $H$ are as above, and that the Bockstein
element $\beta$ is nilpotent. Then $\beta_M$ is nilpotent as well
and therefore Moore's conjecture holds for
$(\Gamma,H)$.

\end{corollary}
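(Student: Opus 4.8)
Throughout, write $p=[\Gamma:H]$, so $\Gamma/H\cong\Z_p$ and $\hat\beta$ is the chosen generator of $H^2(\Gamma/H,\Z)$. The statement draws two conclusions from the nilpotency of $\beta$: first that $\beta_M$ is nilpotent for every $\Gamma$-module $M$, and then that Moore's conjecture holds for $(\Gamma,H)$ — which, by definition, means that $(\Gamma,H)$ satisfies Moore's condition and that every $\Z\Gamma$-module projective over $\Z H$ is projective over $\Z\Gamma$. So the plan is: (1) deduce that $\beta_M$ is nilpotent; (2) for $M$ projective over $H$, invoke Proposition \ref{nilpotency} (the implication $c\determines a$) to conclude that $M$ is projective over $\Gamma$; and (3) check that Moore's condition holds for $(\Gamma,H)$.

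For (1) I would use the cup-product action of the graded ring $H^*(\Gamma,\Z)=\Ext^*_\Gamma(\Z,\Z)$ on $\Ext^*_\Gamma(M,M)$: under it, $\beta_M\in\Ext^2_\Gamma(M,M)$ is precisely $\beta\cdot\id_M$, i.e.\ the image of $\beta$ under the unital graded-ring homomorphism
$$\Phi\colon H^*(\Gamma,\Z)=\Ext^*_\Gamma(\Z,\Z)\longrightarrow\Ext^*_\Gamma(M,M),\qquad \Phi(\zeta)=\zeta\cdot\id_M.$$
On classes represented by $\Z$-split extensions — in particular on $\beta$, as already observed in the text — $\Phi$ agrees with the operation $\zeta\mapsto\zeta\otimes_\Z M$ that was used to define $\beta_M$. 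Since a ring homomorphism sends nilpotents to nilpotents, $\beta^n=0$ in $H^*(\Gamma,\Z)$ forces $\beta_M^n=\Phi(\beta)^n=\Phi(\beta^n)=0$; hence $\beta_M$ is nilpotent, and Proposition \ref{nilpotency} then yields (2): every $\Gamma$-module projective over $H$ is projective over $\Gamma$.

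For (3), Moore's condition likewise follows from the nilpotency of $\beta$. If some $g\in\Gamma-H$ had prime order $q$, then its image in $\Gamma/H\cong\Z_p$, being nontrivial, would have order $p$, forcing $q=p$; thus $C:=\langle g\rangle$ has order $p$ and is carried isomorphically onto $\Gamma/H$ by the quotient map, via an isomorphism $\theta$. Then $res^\Gamma_C\circ inf^\Gamma_{\Gamma/H}=\theta^*$, so $res^\Gamma_C(\beta)=\theta^*(\hat\beta)$ is a generator of $H^2(C,\Z)\cong\Z/p$; but $H^*(\Z_p,\Z)=\Z[\hat\beta]/(p\hat\beta)$ (with $\hat\beta$ of degree $2$) has no nilpotent elements in positive degree, so $res^\Gamma_C(\beta)$ is not nilpotent — contradicting, via the ring homomorphism $res^\Gamma_C$, the nilpotency of $\beta$. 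Hence $\Gamma-H$ contains no element of prime order, Moore's condition holds, and combining (1)--(3) we conclude that Moore's conjecture holds for $(\Gamma,H)$. The only step that is not purely formal is the ring-homomorphism claim for $\Phi$ in (1) — equivalently, the compatibility of $-\otimes_\Z M$ (with the diagonal action) with Yoneda splicing, which rests on the $\Z$-splittings of the extensions involved; I would verify this directly, keeping track of those splittings, or else deduce it from the standard properties of the external cup product together with restriction along the diagonal $\Gamma\to\Gamma\times\Gamma$. Steps (2) and (3) are merely an application of Proposition \ref{nilpotency} and an elementary restriction argument, so I do not expect any difficulty there.
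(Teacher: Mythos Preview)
Your argument is correct and matches the paper's approach for the main step. The paper's entire proof is the single observation that $\beta^i\otimes_{\Z}M=\beta_M^i$ for every $i$, which is precisely the multiplicativity of your map $\Phi$ on powers of $\beta$; your cup-product/ring-homomorphism formulation is just a more structural way of saying the same thing, and your caveat about checking compatibility of $-\otimes_\Z M$ with Yoneda splicing on $\Z$-split extensions is exactly the content behind the paper's ``it is easy to see''.

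One point worth noting: your step (3), verifying that nilpotency of $\beta$ forces Moore's condition, is something the paper asserts in the introduction (the parenthetical in Proposition~\ref{nil-conj}) but does not actually prove anywhere, and in particular omits from the proof of this corollary. Your restriction argument via $C=\langle g\rangle\cong\Z_p$ is the natural way to fill that gap, so in this respect your write-up is more complete than the paper's.
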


\begin{proof}

It is easy to see that $\beta^i\otimes_{\Z}M = \beta_M^i$
for every $i$, and therefore if $\beta$ is nilpotent then $\beta_M$
is nilpotent as well.

\end{proof}

\begin{corollary}\label{finiteness}

Suppose that $\Gamma$ and $H$ are as above, and $M$ is a
$\Gamma$-module of finite projective dimension, which is projective
over $H$. Then $M$ is projective over $\Gamma$. In particular, if
$\Gamma$ is a group of finite cohomological dimension, then Moore's
conjecture is true for $(\Gamma,H)$.

\end{corollary}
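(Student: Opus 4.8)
The plan is to reduce the statement to Proposition \ref{nilpotency}: since $M$ is assumed to be projective over $H$, it suffices to show that $\beta_M \in \Ext^2_\Gamma(M,M)$ is nilpotent, and in fact zero. First I would invoke the hypothesis that $M$ has finite projective dimension over $\Z\Gamma$, say $\text{Proj.dim}_{\Z\Gamma}(M) = d < \infty$. Then $\Ext^n_\Gamma(M,N) = 0$ for all $\Z\Gamma$-modules $N$ and all $n > d$. In particular $\Ext^n_\Gamma(M,M) = 0$ for $n > d$, so any element of $\Ext^*_\Gamma(M,M)$ of positive degree is nilpotent for degree reasons; applying this to $\zeta = \beta_M$ (whose degree is $2$) shows $\beta_M^k = 0$ for $k$ large enough (explicitly, any $k$ with $2k > d$). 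Hence $\beta_M$ is nilpotent, and Proposition \ref{nilpotency} (the equivalence of conditions a and c) gives that $M$ is projective over $\Gamma$.

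For the ``in particular'' clause, suppose $\Gamma$ has finite cohomological dimension, say $cd(\Gamma) = d < \infty$, and let $(\Gamma, H)$ be any pair with $H$ normal of prime index; I would first check that Moore's condition is automatically satisfied here — indeed, if $cd(\Gamma) < \infty$ then $\Gamma$ is torsion free, so $\Gamma - H$ contains no elements of prime order, which is exactly Moore's condition. Now let $M$ be any $\Z\Gamma$-module which is projective over $H$. Since $cd(\Gamma) = d$, every $\Z\Gamma$-module has projective dimension at most $d$, so in particular $M$ has finite projective dimension over $\Z\Gamma$. Applying the first part of the corollary to this $M$ yields that $M$ is projective over $\Gamma$. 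As $M$ was an arbitrary $\Z\Gamma$-module projective over $H$, and $(\Gamma,H)$ was an arbitrary pair satisfying Moore's condition, this says precisely that Moore's conjecture holds for the group $\Gamma$.

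The main (and only) subtlety I anticipate is making sure the ingredients are legitimately available at this point in the paper: the reduction to $H$ normal of prime index is justified by the Lemma proved at the start of this section, and Proposition \ref{nilpotency} is the substantive input — it is the analogue of Chouinard's theorem and already incorporates the spectral-sequence argument. Everything else is the elementary observation that in a graded ring whose graded pieces vanish above a finite degree, every homogeneous element of positive degree is nilpotent. No serious obstacle remains; the proof is a short two-step argument once Proposition \ref{nilpotency} is in hand.
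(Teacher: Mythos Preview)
Your proposal is correct and follows essentially the same approach as the paper: finite projective dimension forces $\Ext^n_\Gamma(M,M)=0$ for large $n$, so $\beta_M$ is nilpotent, and then Proposition~\ref{nilpotency} applies; the ``in particular'' clause follows because finite cohomological dimension bounds the projective dimension of every module. You supply more detail than the paper (which compresses the whole argument into two sentences), including the observation that $cd(\Gamma)<\infty$ forces $\Gamma$ to be torsion free so that Moore's condition holds automatically---but the logical structure is identical.
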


\begin{proof}

If $M$ has finite projective dimension, then $\beta_M$ must be
nilpotent and proposition \ref{nilpotency} applies. If $\Gamma$ has finite
cohomological dimension, then any $M$ has finite projective
dimension.
\end{proof}
\begin{remark}\label{finiteness remark}
Corollary \ref{finiteness} is actually true in general.
That is, if $H$ is a finite index subgroup of $\Ga$, and
$M$ is a $\Ga$-module which is $H$-projective and which has
a finite length resolution over $\Ga$, then $M$ is projective over $\Ga$.
This can be proved using the Eckmann-Shapiro Lemma.
The proof is a variant of the proof of Lemma 9.1 of \cite{Swan}.
\end{remark}

As mentioned above, the Bockstein $\beta$ is nilpotent
whenever there is a finite index subgroup $K$ of $H$, normal in
$\Gamma$, such that $(\Gamma/K,H/K)$ satisfies Moore's condition.
One may ask if the existence of such finite quotient is necessary
for the nilpotency of $\beta_{\Gamma,H}$? The following two examples
shows that the answer is negative.

\begin{example}
Consider the group $G=\Z$, and let $p$ be a prime number. The group $G$ has a finite index normal subgroup $\tilde{H}=p\Z$.
The group $\tilde{H}$ is also embedded in the additive group of rational numbers $\Q$.
We can thus form the free product with amalgamation $\Ga=G*_{\tilde{H}}\Q$.
Since $\Q$ has no finite index subgroups, each finite index subgroup of $\Ga$ must contains $\Q$,
and therefore also $\tilde{H}$. It follows easily that the only finite index subgroup of $\Ga$ is the normal closure of $\Q$, which we shall denote by $H$.
This is a normal subgroup of index $p$. It is easy to see that there is no subgroup $K<H$ of finite index such that $(\Ga/K,H/K)$ satisfies Moore's condition.
However, $\Ga$ has finite cohomological dimension, being the free product with amalgamation of groups of finite cohomological dimension, and the Bockstein $\beta$ is therefore nilpotent.
\end{example}
\begin{example}
As another example, consider the group $SL_{n}(\mathbb{Z})$ for $n > 2$. Let $m>3$ be a natural number, and let
$\Gamma_{n}(m)=ker(\phi)$ where $\phi:
SL_{n}(\mathbb{Z})\longrightarrow SL_{n}(\mathbb{Z}_{m})$ is the natural map. The group $\Gamma_n(m)$ is known as a
congruence subgroup. Recall (see \cite{cong}, Lemma 4.7.11 and Proposition 4.7.12) that $\Ga_{n}(m)$ is torsion free, residually finite, and
moreover has finite cohomological dimension.
On the other hand, it is known that there is torsion in the profinite
completion $\widehat{\Ga_n(m)}$ of $\Ga_{n}(m)$.
Let us denote an element of prime order $p$ in $\widehat{\Ga_n(m)}$ by $x$.
If $H$ is a finite index subgroup of $\Ga_n(m)$, then $\widehat{H}$,
the profinite completion of $H$, is a finite index subgroup of $\widehat{\Ga_n(m)}$.
It is easy to see that the intersection of all such $\widehat{H}$ is trivial
(since both $\Ga_n(m)$ and $\widehat{\Ga_n(m)}$ are residually finite),
and therefore there is a subgroup $H$ of $\Ga_n(m)$ such that $x\notin \widehat{H}$.
By taking the core of $H$ if necessary, we may assume that $H$ is normal in $\Ga_n(m)$.
Since elements of $\widehat{\Ga_n(m)}$ are just coherent families of elements of $\Ga_n(m)/K$,
where $K$ runs over all finite index normal subgroups of $\Ga_n(m)$,
it is reasonable to speak about the element $x$ $mod$ $H$ of $\Ga_n(m)/H$.
We can lift this element to an element $y$ of $\Ga_n(m)$ which satisfies $y\notin H$ and $y^p\in H$.
Consider now the subgroup $R$ of $\Ga_n(m)$ which is generated by $H$ and $y$.
The group $H$ is a normal subgroup of $R$ of prime index $p$. Since $\Ga_n(m)$ has finite cohomological dimension,
the same is true for $R$ and therefore the Bockstein of $(R,H)$ is nilpotent.
On the other hand, it is easy to see that if we consider $\widehat{R}$ as a subgroup of $\widehat{\Ga_n(m)}$,
then $x$ is in $\widehat{R} - \widehat{H}$. Therefore there cannot be a finite index normal subgroup $K$ of $R$
such that $(R/K,H/K)$ satisfies Moore's condition, even though the Bockstein is nilpotent.
\end{example}
\end{section}

\begin{section}{Closure under short exact sequences}\label{closure section}

In this section we present the proof of Proposition \ref{closure} from the introduction.
Recall that we have the following map of short exact sequences:
\begin{displaymath}
\xymatrix{1\ar[r] & H\ar[d]\ar[r] & \Gamma\ar[d]^{\phi}\ar[r] &
\Gamma/H\ar[d]^{\cong}\ar[r] & 1\\ 1\ar[r] & H^{'}\ar[r] &
\Gamma^{'}\ar[r] & \Gamma^{'}/H^{'}\ar[r] & 1.}
\end{displaymath}

We know that $(\Ga',H')$ satisfies Moore's conjecture, and we would like to prove that $(\Ga,H)$ satisfies Moore's conjecture.
In the course of the proof, we will use the following result of Benson and Goodearl (see \cite{BG}) :
\begin{theorem}\label{Benson}
Let $\Ga$ be a group, and $H$ a finite index subgroup. Suppose
that $M$ is a $\Ga$-module which is projective over $H$ and flat
over $\Ga$. Then $M$ is projective over $\Ga$.
\end{theorem}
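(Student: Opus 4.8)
The strategy is to reduce the statement to a fact about periodic flat modules.

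\emph{First}, I would reduce to the case in which $H$ is normal in $\Ga$ of prime index. Passing to the core makes $H$ normal of finite index, and then the same transfer argument as in the reduction lemma above---applied over the Sylow subgroups of $\Ga/H$ and along chains of subgroups with successive quotients of order $p$ inside them---brings us to the case $\Ga/H\cong\Z_p$. The only point to check at each stage is that the two hypotheses survive: flatness over $\Z\Ga$ is inherited by any finite index subgroup (since $\Z\Ga$ is free, hence flat, as a module over such a subgroup, and flatness is transitive along ring maps), and projectivity over $\Z H$ is inherited by finite index subgroups of $H$. From now on, then, $H\trianglelefteq\Ga$ with $\Ga/H\cong\Z_p$, and $M$ is flat over $\Z\Ga$ and projective over $\Z H$.

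\emph{Second}, I would make the periodicity explicit. By the tensor identity there is an isomorphism of $\Z\Ga$-modules $\Z[\Ga/H]\otimes_{\Z}M\cong\Z\Ga\otimes_{\Z H}M$ (diagonal action on the first, action on the left factor of the second); call this module $P$. It is projective over $\Z\Ga$, since $M$ is projective over $\Z H$ and $\Z\Ga$ is free over $\Z H$. Tensoring over $\Z$ the defining four term sequence of the Bockstein, $0\to\Z\to\Z[\Ga/H]\to\Z[\Ga/H]\to\Z\to 0$ (which is $\Z$-split), with $M$ yields a four term exact sequence $0\to M\to P\xrightarrow{b}P\xrightarrow{c}M\to 0$ of $\Z\Ga$-modules representing $\beta_M$. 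Truncating it to the start of a projective resolution of $M$, the first syzygy is $\Omega^1_{\Z\Ga}M=\Ker c=\Image b$ and the second is $\Omega^2_{\Z\Ga}M=\Ker b$, which is the image of $M$ in the left-hand copy of $P$; hence $\Omega^2_{\Z\Ga}M\cong M$.

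\emph{Third}---and this is the heart of the matter, and the actual content imported from Benson and Goodearl---I would prove that a flat $\Z\Ga$-module $M$ with $\Omega^2_{\Z\Ga}M\cong M$ is projective. When $M$ is countably generated this is straightforward: a countably generated flat module is a countable direct limit of finitely generated free modules, so its mapping telescope exhibits it in a short exact sequence $0\to F_1\to F_0\to M\to 0$ with $F_0,F_1$ free; thus $\Omega^1_{\Z\Ga}M$ is projective, the exact sequence $0\to\Omega^2_{\Z\Ga}M\to P\to\Omega^1_{\Z\Ga}M\to 0$ extracted in the second step splits, and therefore $M\cong\Omega^2_{\Z\Ga}M$ is a direct summand of the projective module $P$, hence projective. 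For arbitrary $M$ one has to reduce to the countably generated case, by filtering $M$ along a continuous well-ordered chain of countably generated, $\Z\Ga$-pure submodules chosen compatibly with the self-isomorphism $\Omega^2_{\Z\Ga}M\cong M$, and then running the argument level by level. Having shown $M$ is projective over $\Z\Ga$ we are done; equivalently, one has shown $\beta_M=0$, so Proposition \ref{nilpotency} applies.

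The step I expect to be the main obstacle is the last part of the third step, namely the passage from countably generated $M$ to arbitrary $M$: producing a transfinite filtration of the flat module $M$ by countably generated pure submodules that respects the periodicity is delicate, and it is exactly this that constitutes the technical core of the Benson--Goodearl theorem being invoked.
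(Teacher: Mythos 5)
The paper does not prove this statement at all: it is quoted verbatim from Benson and Goodearl \cite{BG} and used as a black box, so there is no internal proof to measure you against. Judged on its own terms, your reduction is sound and in fact follows the same architecture as the actual argument in \cite{BG}. The restriction of the two hypotheses to finite-index subgroups is correct ($\Z\Ga$ is free over $\Z\Ga'$, so flatness descends by transitivity, and likewise projectivity over $\Z H$ descends to subgroups of $H$), the passage to the normal prime-index case via the core, Sylow preimages and the $cor\cdot res$ transfer on $\Ext^1$ works, and the four-term sequence $0\to M\to P\to P\to M\to 0$ with $P\cong\Z\Ga\otimes_{\Z H}M$ projective is exactly the Bockstein construction used elsewhere in this paper; it does exhibit $M$ as a period-two flat module. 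Your treatment of the countably generated case (Lazard plus the mapping telescope to get $pd_{\Z\Ga}M\le 1$, then Schanuel to see that $\Omega^1_{\Z\Ga}M$ is projective and that the truncated sequence splits) is also correct.

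The one genuine gap is the one you name yourself: the claim that an arbitrary (not countably generated) flat module satisfying $\Omega^2_{\Z\Ga}M\cong M$ is projective. The transfinite filtration by countably generated pure submodules compatible with the periodicity isomorphism is not a routine refinement of the countable case --- it is precisely the main theorem of \cite{BG}, and without it your argument proves the statement only for countably generated $M$. So your proposal should be read as a correct reduction of the theorem to the cited result rather than as an independent proof; as such it is consistent with (and more informative than) what the paper does, but it does not discharge the citation. One small additional remark: the reduction to prime index, while harmless, is not strictly necessary --- setting $N=\ker(P\to M)$, which is flat as the kernel of a map from a flat module onto a flat module, the module $M\oplus N$ becomes periodic of period one, so the group-ring statement for arbitrary finite index follows directly from the period-one case of the Benson--Goodearl theorem.
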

\begin{remark}
Notice that in order to apply the theorem, it is not necessary
that $(\Ga,H)$ satisfies Moore's condition. However, we shall use
the theorem in those cases only.\end{remark}

\begin{proof} As can easily be seen, we need to consider only the
special cases in which $\phi$ is one to one and in which $\phi$ is onto.
So suppose that $\phi$ is one to one and let $M$ be a $\Ga$-module
which is projective over $H$. Consider the induced module
$\tilde{M}=Ind^{\Ga'}_{\Ga}M$. This module is a $\Ga'$-module
which is projective over $H'$, so by assumption it is projective
over $\Ga'$. But when restricting to $\Ga$, $M$ is a direct
summand of $\tilde{M}$, and thus $M$ is $\Ga$-projective. The case
where $\phi$ is onto is more subtle. We shall prove that if $M$ is
a $\Ga$-module which is $H$-projective, then it is $\Ga$-flat. Then
we use Theorem \ref{Benson} to complete the proof. So let $M$ be such a module. We would
like to show that for every $\Ga$-module $N$ we have that
$Tor^{\Ga}_n(M,N)=0$ for every $n>0$. Consider first the case in
which $N$ is free as an abelian group. In this case it is easy to
see that $M\otimes_{\Z}N$ is projective as an $H$-module, where
$H$ acts diagonally. This is true due to the fact that we can
reduce easily to the case in which $M$ is a free $\Z H$-module,
and in that case it is true that as a $\Z H$-modules, $M\otimes_{\Z}N\cong
M\otimes_{\Z}N_{tr}$, which is a free $H$-module, where $N_{tr}$
is $N$ with the trivial $H$-action. Let us denote the kernel of
$\phi$ by $K$. The torsion groups $Tor^{\Ga}_n(M,N)$ are the
homology groups of the complex $P^*\otimes_{\Ga}N\rightarrow
M\otimes_{\Ga}N$, where $P^*\rightarrow M$ is a projective
resolution of $M$ over $\Ga$. Consider $P^*\otimes_{\Z} N\rightarrow
M\otimes_{\Z}N$ as a complex of $K$-modules. As such, it is a
resolution of a projective module by projective modules, and thus
it splits. It follows that after applying the functor $(-)_K$ of taking $K$-coinvariants, the
complex stays exact, and thus $P\otimes_K N\rightarrow M\otimes_K
N$ is still a resolution. $M\otimes_{\Z} N$ was projective over
$H$, and therefore $(M\otimes_{\Z}N)_K = M\otimes_K N$ is
projective over $H/K=H'$. But by assumption, $M\otimes_K N$ is
also projective over $\Ga'$. So by applying the functor
$(-)_{\Ga'}$, we get that the complex $P\otimes_{\Ga}N\rightarrow
M\otimes_{\Ga}N$ is exact, as required. In case $N$ is not free as
an abelian group, we proceed in the following way:
take a resolution of $N$ by $\Ga$-modules which
are free as abelian groups, $0\rightarrow Y\rightarrow
X\rightarrow N\rightarrow 0$, and use the long exact sequence in
homology in order to deduce that $Tor^{\Ga}_n(M,N)=0$ for $n>1$.
Consider a short exact sequence $0\rightarrow Q\rightarrow P\rightarrow M\rightarrow 0$ of $\Ga$-modules such that $P$ is projective over $\Ga$.
Since $M$ is projective over $H$, the same holds for $Q$, since the sequence splits over $H$. Now if $N$ is \emph{any} $\Ga$-module, we have that for every $n>0$ $Tor^{\Ga}_n(Q,N)=Tor^{\Ga}_{n+1}(M,N)=0$. The module $Q$ is thus $\Ga$ projective, and thus $M$ is projective over $H$ and has a projective resolution of length 1 over $\Ga$. It follows from Corollary \ref{finiteness} and Remark \ref{finiteness remark} that $M$ is projective over $\Ga$, as desired.\end{proof}
\end{section}

\begin{section}{Kropholler's hierarchy and the operator $\h$}\label{Kropholler}

Let $\x$ be any class of groups. Following Kropholler (see
\cite{K2}) we define the class $\h_1\x$ to be the class of all
groups $G$ which satisfies the following condition: $G$ acts on a
contractible finite dimensional CW-complex $X$ via a cellular
action, such that the setwise and the pointwise stabilizers of the
cells coincide, and the stabilizers of the cells are subgroups of
$G$ which lies inside $\x$.

Notice that actually $\x\subseteq \h_1\x$ since any group $G$ acts
on the one point CW-complex trivially, and the stabilizer of the
one point is $G$ itself. We now define by transfinite induction
the class $\h\x$. Define $\h_0\x = \x$,
$\h_{\alpha+1}\x=\h_1(\h_{\alpha}\x)$, and
$\h_{\lambda}\x=\bigcup_{\gamma<\lambda}\h_{\gamma}\x$ for a limit
ordinal $\lambda$. We define $\h\x$ to be the union of
$\h_{\alpha}\x$ over all ordinals. Notice that the class $\h\x$ is
closed under the operator $\h_1$ in the sense that
$\h_1(\h\x)=\h\x$. We denote by $\f$ the class of all finite
groups. Thus the class $\h\f$ is defined. For any class of groups
$\x$ we define $L\x$ to be the class of all groups $G$ such that
every finitely generated subgroup of $G$ is in $\x$ (that is- $G$
is locally inside $\x$). In particular, the class $\lhf$ is
defined. For a thorough investigation of the class $\lhf$, the
reader is advised to look at papers by Kropholler et al.
\cite{K1}, \cite{K2}, \cite{K3} and \cite{K4}.

In \cite{ACGK} Aljadeff et al. have proved that if $\Ga$ is in
$\lhf$, and $H$ is a finite index subgroup of $\Ga$ such that
$(\Ga,H)$ satisfies's Moore's condition, then Moore's conjecture
is true for finitely generated modules, that is- if $M$ is a
finitely generated $\Ga$-module which is projective over $H$, then
it is also projective over $\Ga$. We would like to prove that this
is actually true without the assumption that the module $M$ is
finitely generated.

The following was proved by the first author in \cite{Al2}:
\begin{proposition}\label{localness}

Let $\Ga$ be a group, and $H$ a finite index subgroup. Suppose
that for every finitely generated subgroup $K$ of $\Ga$ it is true
that $(K,K \cap H)$ satisfies Moore's conjecture. Then $(\Ga,H)$
satisfies Moore's conjecture.

\end{proposition}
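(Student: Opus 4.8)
The plan is to prove that a $\Z\Gamma$-module $M$ which is projective over $\Z H$ is automatically \emph{flat} over $\Z\Gamma$, and then to invoke Theorem~\ref{Benson} of Benson and Goodearl, which upgrades ``flat over $\Gamma$ and projective over a finite index subgroup'' to ``projective over $\Gamma$''. The reason for this detour is that one cannot argue for projectivity ``locally'': a directed colimit of projective modules need not be projective. Flatness, on the other hand, is a finitary condition, and $\Gamma$ is the directed union of its finitely generated subgroups. Before that, Moore's condition for $(\Gamma,H)$ is immediate: if $g\in\Gamma-H$ had prime order $p$, then $K=\langle g\rangle$ is a finitely generated subgroup with $K\cap H=\{e\}$, so $g\in K-(K\cap H)$ would be an element of prime order, contradicting that $(K,K\cap H)$ satisfies Moore's conjecture, which by definition includes Moore's condition.

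Now let $M$ be a $\Z\Gamma$-module projective over $\Z H$. For any finitely generated subgroup $K\leq\Gamma$, the ring $\Z H$ is free over $\Z(K\cap H)$, so $M$ is projective over $\Z(K\cap H)$; since $K\cap H$ has finite index in $K$ and $(K,K\cap H)$ satisfies Moore's conjecture by hypothesis, the $\Z K$-module $M|_K$ is projective --- in particular flat --- over $\Z K$. To pass from this to flatness over $\Z\Gamma$, regard $M$ as a right module via the anti-automorphism $g\mapsto g^{-1}$ of $\Z\Gamma$ and use the standard criterion that flatness is equivalent to $\mathrm{Tor}_1^{\Z\Gamma}(M,\Z\Gamma/\mathfrak a)=0$ for every finitely generated left ideal $\mathfrak a$. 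Choosing generators $a_1,\dots,a_m$ of $\mathfrak a$ and a finitely generated subgroup $K$ containing their supports, put $\mathfrak a_K=\sum_j\Z K\,a_j$. Since $\Z\Gamma$ is free, hence flat, as a $\Z K$-module on either side, applying $\Z\Gamma\otimes_{\Z K}-$ to $0\to\mathfrak a_K\to\Z K\to\Z K/\mathfrak a_K\to 0$ identifies $\Z\Gamma/\mathfrak a$ with $\Z\Gamma\otimes_{\Z K}(\Z K/\mathfrak a_K)$, and the same flatness yields a change-of-rings isomorphism $\mathrm{Tor}_1^{\Z\Gamma}(M,\Z\Gamma\otimes_{\Z K}(\Z K/\mathfrak a_K))\cong\mathrm{Tor}_1^{\Z K}(M,\Z K/\mathfrak a_K)$, obtained by inducing a $\Z K$-projective resolution of $\Z K/\mathfrak a_K$ up to $\Z\Gamma$. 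The right-hand side is $0$ because $M|_K$ is flat over $\Z K$, so $M$ is flat over $\Z\Gamma$.

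Finally, $M$ is projective over $\Z H$, the index $[\Gamma:H]$ is finite, and $M$ is flat over $\Z\Gamma$, so Theorem~\ref{Benson} gives that $M$ is projective over $\Z\Gamma$; together with Moore's condition this says $(\Gamma,H)$ satisfies Moore's conjecture. The only genuinely delicate point is the middle step: one must recognize that the right target is flatness rather than projectivity, and then exploit that a finitely generated ideal of $\Z\Gamma$ is already defined over a finitely generated subgroup $K$, together with the freeness of $\Z\Gamma$ over $\Z K$; the rest is routine homological bookkeeping with left/right conventions.
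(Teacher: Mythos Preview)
Your proof is correct and follows the same route as the paper: show that $M$ is projective, hence flat, over every finitely generated subgroup $K$, deduce flatness over $\Z\Gamma$, and then apply Benson--Goodearl. The paper's version is terse (it asserts ``this implies that $M$ is actually flat over $\Gamma$'' without elaboration), whereas you supply the details via the finitely-generated-ideal criterion and change of rings, and you also spell out why Moore's condition for $(\Gamma,H)$ is inherited from the hypothesis; both additions are accurate and welcome.
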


\begin{proof}

Let $M$ be a $\Ga$-module which is $H$-projective. Then by the
assumption, $M$ is projective (and hence flat) over any finitely
generated subgroup $K$ of $\Ga$. This implies that $M$ is actually
flat over $\Ga$. Using Theorem \ref{Benson} above, the result
follows.\end{proof}

We can now prove theorem \ref{LHF thm} stated in the introduction:
\begin{proposition}\label{Kropholler1}
Let $\Ga\in \lhf$, and let $H$ be a finite index subgroup of $\Ga$
such that $(\Ga,H)$ satisfies Moore's condition. Then Moore's
conjecture is true for $(\Ga,H)$.
\end{proposition}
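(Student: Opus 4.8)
The plan is to reduce to the finitely generated case, which is handled by the result of Aljadeff et al. in \cite{ACGK}, by invoking Proposition \ref{localness}. So let $M$ be a $\Ga$-module which is projective over $H$. By Proposition \ref{localness}, it suffices to show that for every finitely generated subgroup $K$ of $\Ga$, the pair $(K, K\cap H)$ satisfies Moore's conjecture. Fix such a $K$. Since $\Ga\in\lhf$ and $K$ is finitely generated, $K$ itself lies in $\hf$. Moreover $K\cap H$ has finite index in $K$, and I would first check that $(K, K\cap H)$ inherits Moore's condition from $(\Ga, H)$: an element of prime order in $K - (K\cap H)$ would be an element of prime order in $\Ga - H$, which is excluded. (Strictly, one should again pass to the core of $K\cap H$ in $K$ to make the subgroup normal, as in the reduction lemma of section \ref{general}, but this does not affect the argument.)

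The heart of the matter is therefore: if $K\in\hf$ and $L$ is a finite index subgroup of $K$ such that $(K,L)$ satisfies Moore's condition, then $(K,L)$ satisfies Moore's conjecture for an \emph{arbitrary} module, not merely a finitely generated one. Here the strategy I would use is transfinite induction on the ordinal $\alpha$ with $K\in\h_\alpha\f$, the base case $\alpha=0$ being Serre's theorem together with Chouinard/Proposition \ref{nilpotency} for finite groups. For the inductive step with $K\in\h_1(\h_\alpha\f)$, the group $K$ acts on a contractible finite-dimensional CW-complex $X$ whose cell stabilizers lie in $\h_\alpha\f$. Given a $K$-module $N$ which is $L$-projective, I want to show $N$ is $K$-projective; by Theorem \ref{Benson} it is enough to show $N$ is flat over $\Z K$. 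Using the cellular chain complex of $X$, which is a finite-length resolution of $\Z$ by $\Z K$-modules that are permutation modules on the cells, I would tensor with $N$ to get a finite-length resolution of $N$ by modules induced from the various cell stabilizers $\Ga_\sigma\in\h_\alpha\f$. Each such induced module $\Ind_{\Ga_\sigma}^{K}(N)$ is projective over $\Ga_\sigma\cap L$ (restriction of an $L$-projective module, suitably induced), hence by the inductive hypothesis it is projective — in particular flat — over $\Ga_\sigma$, hence flat over $K$ after inducing up. Since $N$ has a finite-length resolution by $\Z K$-flat modules, $N$ is itself $\Z K$-flat, and Theorem \ref{Benson} finishes this case. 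For a limit ordinal $\lambda$ the statement is immediate since $\h_\lambda\f=\bigcup_{\gamma<\lambda}\h_\gamma\f$.

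The main obstacle I anticipate is the bookkeeping around stabilizers: one must be careful that when the cell stabilizer $\Ga_\sigma$ does not contain an element of prime order from $K-L$, so that Moore's condition passes down to $(\Ga_\sigma, \Ga_\sigma\cap L)$ and the inductive hypothesis genuinely applies, and one must correctly identify $\Ind_{\Ga_\sigma}^{K}(N)|_{\Ga_\sigma}$ as a module that is projective over $\Ga_\sigma\cap L$ (this uses Mackey-type decompositions and the fact, already used in section \ref{closure section}, that restriction of a projective module along a finite-index inclusion stays projective). A second, more technical point is verifying that the cellular chain complex of $X$ really does give a resolution of $\Z K$-modules of the required form and that tensoring over $\Z$ with $N$ preserves exactness — this is where finite-dimensionality of $X$ is essential, guaranteeing a \emph{finite}-length flat resolution so that flatness of $N$ can be concluded. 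Everything else is an assembly of Theorem \ref{Benson}, Proposition \ref{localness}, and the finitely generated case from \cite{ACGK}.
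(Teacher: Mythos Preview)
Your overall architecture matches the paper's proof exactly: reduce from $\lhf$ to $\hf$ via Proposition~\ref{localness}, and for $K\in\hf$ run transfinite induction using the cellular chain complex of the $K$-complex $X$, tensored with the module, so that each term becomes a sum of modules $\Ind_{\Ga_\sigma}^{K}(N|_{\Ga_\sigma})$ with $\Ga_\sigma$ a cell stabilizer lower in the hierarchy.

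There is one genuine slip in the inductive step. You write that the induced modules are flat over $K$, and then infer ``since $N$ has a finite-length resolution by $\Z K$-flat modules, $N$ is itself $\Z K$-flat.'' That inference is false in general: a finite flat resolution only yields finite flat dimension, not flatness (already over $\Z$, the module $\Z/p$ has a length-one flat resolution). What saves you is that the terms are in fact \emph{projective} over $K$, not merely flat: by induction $N|_{\Ga_\sigma}$ is projective over $\Ga_\sigma$, and induction of a projective is projective. So $N$ has a finite \emph{projective} resolution over $K$, and the paper then invokes Corollary~\ref{finiteness} (and Remark~\ref{finiteness remark}) to conclude $N$ is $K$-projective directly. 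The detour through Benson--Goodearl at this stage is unnecessary and is exactly where your gap appears; Theorem~\ref{Benson} is used only at the very end, inside Proposition~\ref{localness}, to pass from $\hf$ to $\lhf$.

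Two smaller remarks. First, the reference to \cite{ACGK} is a red herring: that paper handles finitely generated \emph{modules}, whereas what you need (and correctly supply) is the conjecture for arbitrary modules over groups in $\hf$. Second, your worry about Mackey decompositions is unfounded: you only need that $N|_{\Ga_\sigma}$ is projective over $\Ga_\sigma\cap L$, which is immediate from $N|_L$ projective by further restriction; no double-coset analysis is required.
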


\begin{proof}
We first prove the theorem for $\Ga\in \h\f$. We use induction on
the first $\alpha$ for which $\Ga\in \h_{\alpha}\f$. The case
$\alpha=0$ is considered in section \ref{general}. The case where
$\alpha$ is a limit ordinal cannot happen as in that case
$\Ga\in\h_{\gamma}\f$ for some $\gamma<\alpha$. Suppose that
$\alpha=\gamma+1$. Then $\Ga$ acts on a contractible CW-complex
$X$ of dimension $n$ such that the stabilizers of the cells are
subgroups $K_i$ of $\Ga$ which lies inside $\h_{\gamma}\f$.
Consider the cellular chain complex $C$ of $X$, $$C=0\rightarrow
C_n\rightarrow C_{n-1}\rightarrow\ldots\rightarrow C_0\rightarrow
\Z\rightarrow 0.$$ It is finite dimensional since $X$ is finite
dimensional, and it is acyclic since $X$ is contractible. The
action of $\Ga$ on $X$ induces an action of $\Ga$ on $C$. As such,
each $C_j$ decomposes as a direct sum of permutation modules of
the form $\Z\Ga/K_i$, with one direct summand corresponds to each
orbit of the action of $\Ga$ on the $j$-cells. Let now $M$ be a
$\Ga$-module which is projective over $H$. By restriction, $M$ is
a $K_i$ module which is projective over $K_i\cap H$ for every $i$.
Since $(\Ga,H)$ satisfies Moore's condition, the same is true for
the pair $(K_i,K_i\cap H)$. But $K_i\in \h_{\gamma}\f$ so by
induction we can assume that actually $M$ is projective over $K_i$
for every $i$. By taking the tensor product of $C$ with $M$ over $\Z$, we
get the exact sequence $$C\otimes M =0\rightarrow C_n\otimes
M\rightarrow\ldots\rightarrow C_0\otimes M\rightarrow M\rightarrow
0.$$ The module $C_j\otimes M$ decomposes as the direct sum of module of
the form $\Z\Ga/K_i\otimes M$, which is isomorphic to
$Ind_{K_i}^{\Ga}M$ via the isomorphism $gK_i\otimes m\mapsto
g\otimes g^{-1}m$. Since $M$ is $K_i$ projective, it follows that
$Ind_{K_i}^{\Ga}M$ is $\Ga$-projective, and thus all the modules in
the complex $C\otimes M$ (beside $M$ perhaps) are projective. But
this means that $M$ has a projective resolution of finite length.
Using corollary \ref{finiteness} and remark \ref{finiteness remark}, we conclude that $M$ is
projective as well. In case we only know that the group $\Ga$ is in
$\lhf$, the argument we used shows that a $\Ga$-module $M$ which
is projective over $H$ is also projective over every finitely
generated subgroup $K$ of $\Ga$. Using proposition \ref{localness}
we conclude that $M$ is projective over $\Ga$ as well.\end{proof}

The argument used in the proof above can be used to
prove a stronger result. Notice first that it was not
really necessary that the complex $X$ would be contractible.
It would be enough if it were acyclic, since we have
only used the acyclicity of the complex $C$. In order to state the
generalization we need to make some definitions. Let $\y$ be a
class of pairs of a group and a finite index subgroup. This means that
elements of $\y$ are pairs of the form $(\Ga,H)$ for some group $\Ga$
and a finite index subgroup $H$ of $\Ga$. We define $\h_1\y$ to be
the class of all pairs $(\Ga,H)$ of a group $\Ga$ and a finite
index subgroup $H$, such that $\Ga$ acts cellularly on an acyclic
finite dimensional CW-complex $X$ such that the pointwise
stabilizer and the setwise stabilizer of all the cells coincide,
and such that for every stabilizer subgroup $K$ of one of the
cells, it holds that $(K,K\cap H)$ is in $\y$. We define $\h\y$
and $\lh\y$ in the same way we have defined before $\h\x$ and $\lh\x$.

We have the following:

\begin{proposition}\label{class-close}

Let $\y$ be the class of all pairs $(\Ga,H)$ for which Moore's
conjecture holds. Then $\y=\l\h\y$.

\end{proposition}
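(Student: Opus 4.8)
The plan is to prove the two inclusions $\y\subseteq\lh\y$ and $\lh\y\subseteq\y$ separately; the first is essentially formal, and the second — where all the content lies — is a transfinite induction modelled closely on the proof of Proposition \ref{Kropholler1}.

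For $\y\subseteq\lh\y$ I would first note $\y\subseteq\h\y$: given $(\Ga,H)\in\y$, let $\Ga$ act trivially on the one-point CW-complex; the stabilizer of its unique cell is $\Ga$, and $(\Ga,\Ga\cap H)=(\Ga,H)\in\y$, so $(\Ga,H)\in\h_1\y\subseteq\h\y$. Then I would check that $\y$, and hence $\h\y$, is closed under restriction to subgroups: Moore's condition descends at once, since a prime-order element of $K\setminus H$ is a prime-order element of $\Ga\setminus H$; and the projectivity assertion descends too, which one verifies after reducing to finitely generated $K$ by Proposition \ref{localness}. Given this closure, $\h\y\subseteq L(\h\y)=\lh\y$, hence $\y\subseteq\lh\y$.

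For $\lh\y\subseteq\y$ it suffices to prove $\h\y\subseteq\y$: if $(\Ga,H)\in\lh\y=L(\h\y)$, then $(K,K\cap H)\in\h\y\subseteq\y$ for every finitely generated $K\le\Ga$, and Proposition \ref{localness} gives $(\Ga,H)\in\y$ (this already forces Moore's condition on $(\Ga,H)$, since a prime-order element of $\Ga\setminus H$ would generate a finite cyclic $K$ for which $(K,K\cap H)$ would violate Moore's condition). I would prove $\h\y\subseteq\y$ by transfinite induction on the least ordinal $\alpha$ with $(\Ga,H)\in\h_\alpha\y$: the base case $\alpha=0$ is the definition of $\y$, and a limit $\alpha$ is never minimal since $\h_\lambda\y=\bigcup_{\gamma<\lambda}\h_\gamma\y$. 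In the successor case $\alpha=\gamma+1$, $\Ga$ acts cellularly on an acyclic finite-dimensional CW-complex $X$ with setwise stabilizers equal to pointwise stabilizers, and each cell-stabilizer $K$ satisfies $(K,K\cap H)\in\h_\gamma\y$, hence $(K,K\cap H)\in\y$ by the inductive hypothesis; I would then repeat the proof of Proposition \ref{Kropholler1} with $\f$ replaced by $\y$ and ``contractible'' replaced by ``acyclic''. Thus: (i) if $g\in\Ga\setminus H$ has prime order, the finite $p$-group $\langle g\rangle$ acts on the $\Z$-acyclic, hence $\mathbb{Z}/p$-acyclic, complex $X$, so by Smith theory (the stabilizer condition makes the action admissible) it fixes a point and therefore lies in some cell-stabilizer $K$; then $g\in K\setminus H$ has prime order, contradicting Moore's condition for $(K,K\cap H)\in\y$ — so $(\Ga,H)$ satisfies Moore's condition. (ii) If $M$ is a $\Z\Ga$-module projective over $\Z H$, then $M$ is projective over $\Z(K\cap H)$ by restriction (as $\Z H$ is free over $\Z(K\cap H)$), hence over $\Z K$ because $(K,K\cap H)\in\y$; tensoring the augmented cellular chain complex $0\to C_n\to\cdots\to C_0\to\Z\to 0$ of $X$ with $M$ over $\Z$ keeps it exact (a bounded complex of free abelian groups, $X$ acyclic), with each $C_j\otimes_\Z M$ a direct sum of modules $\Z\Ga/K\otimes_\Z M\cong\Ind_K^\Ga M$ that are $\Z\Ga$-projective; so $0\to C_n\otimes M\to\cdots\to C_0\otimes M\to M\to 0$ is a finite-length $\Z\Ga$-projective resolution of $M$, and Corollary \ref{finiteness} together with Remark \ref{finiteness remark} shows $M$ is projective over $\Z\Ga$. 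Hence $(\Ga,H)\in\y$, closing the induction.

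The main obstacle is the successor step, and the one genuinely new ingredient there, relative to Proposition \ref{Kropholler1}, is item (i): Moore's \emph{condition} for $(\Ga,H)$, a hypothesis in Proposition \ref{Kropholler1}, must now be extracted from Moore's condition for the cell-stabilizers via a fixed-point theorem for finite $p$-groups acting on acyclic finite-dimensional complexes. Everything else is the observation, already made before the statement, that the proof of Proposition \ref{Kropholler1} used only the exactness of the chain complex and never the contractibility of $X$, so passing from ``contractible'' to ``acyclic'' in the definition of $\h_1$ costs nothing; it then remains only to record carefully the restriction-closure of $\y$ used for the first inclusion.
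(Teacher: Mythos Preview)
Your treatment of the substantive inclusion $\lh\y\subseteq\y$ is exactly what the paper intends: the transfinite induction of Proposition~\ref{Kropholler1}, together with the observation (made just before Proposition~\ref{class-close}) that only acyclicity of the cellular chain complex is used. The paper's own proof is the single sentence ``Exactly the same as the proof of~\ref{Kropholler1}.''

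You go beyond the paper in one respect, and correctly so: you notice that membership in $\y$ includes Moore's \emph{condition}, which in Proposition~\ref{Kropholler1} was a hypothesis but here must be extracted from Moore's condition on the cell-stabilizers. Your Smith-theory fixed-point argument for this is sound (the setwise $=$ pointwise hypothesis makes the action admissible, and $\Z$-acyclicity gives $\Z/p$-acyclicity). The paper's one-line proof does not address this point; in its only application (Lemma~\ref{main lemma}) Moore's condition is verified separately, by hand, from the structure of torsion in an amalgamated product.

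There is, however, a gap in your argument for the reverse inclusion $\y\subseteq\lh\y$. You claim that $\y$ is closed under restriction to subgroups and justify the projectivity half by ``reducing to finitely generated $K$ by Proposition~\ref{localness}''. But Proposition~\ref{localness} runs the other way: it assembles the conjecture for $(\Gamma,H)$ from its finitely generated subgroups, it does not deduce the conjecture for a subgroup from the ambient pair. Knowing that $(\Gamma,H)\in\y$ does not obviously force $(K,K\cap H)\in\y$ for arbitrary $K\le\Gamma$: Proposition~\ref{closure} (the injective case) handles this only when $K/(K\cap H)\to\Gamma/H$ is an isomorphism, and the natural reduction via the intermediate group $KH$ just reproduces the same difficulty one step down. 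The paper does not justify this inclusion either, and in fact the only direction ever used (in Lemma~\ref{main lemma}) is $\h\y\subseteq\y$.
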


\begin{proof}

Exactly the same as the proof of \ref{Kropholler1}

\end{proof}

\end{section}

\begin{section}{cases where $\beta$ is not nilpotent}\label{main section}

The goal of this section is to prove Theorem \ref{main1}.
This yields examples for pairs of groups $(\Ga,H)$ which satisfies
Moore's conjecture, even though the Bockstein $\beta$ is not nilpotent.
Our proof uses a construction of Baumslag Dyer and Heller from \cite{BDH}.

Recall that a group $G$ is said to be \emph{acyclic} if $H_n(G,\Z)=0$
for every $n>0$. Using the universal coefficients theorem,
one can see that this determines that $H^n(G,\Z)=0$ for
every $n>0$. Recall first a definition and two propositions from
\cite{BDH}

\begin{definition}

A supergroup $M$ of a group $B$ is called a \emph{mitosis} of $B$
if there exist elements $s$ and $d$ of $M$ such that \\1. $M=\langle
B,s,d\rangle$ \\2. $b^d=bb^s$ for all $b\in B$, and \\3. $[b',b^s]=1$
for all $b,b'\in B$. \\A group $M$ is called \emph{mitotic} if it
contains a mitosis of every one of its finitely generated
subgroups.

\end{definition}

\begin{proposition}

Mitotic groups are acyclic.

\end{proposition}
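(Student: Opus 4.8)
The plan is to follow the standard argument from Baumslag–Dyer–Heller. First I would reduce to the case of finitely generated groups: since homology commutes with direct limits and every group is the direct limit of its finitely generated subgroups, it suffices to show that $H_n(B,\Z) = 0$ for $n > 0$ whenever $B$ is a finitely generated subgroup of a mitotic group $M$ — but here one must be slightly careful, because a finitely generated subgroup of a mitotic group need not itself be mitotic. The trick is to use the mitosis $M_0$ of $B$ that exists inside $M$ by hypothesis: it is a three-element extension $M_0 = \langle B, s, d\rangle$ in which the relations $b^d = bb^s$ and $[b', b^s] = 1$ hold. So the real content is a statement about the pair $B \le M_0$: every homology class of $B$ dies in $M_0$.

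The key step is the following \emph{flabbiness} (or \emph{binate}) argument. Inside $M_0$ consider the two subgroups $B$ and $B^s$; the relation $[b', b^s] = 1$ says they commute elementwise, so the multiplication map gives a homomorphism $\mu \colon B \times B \to M_0$, $(b_1, b_2) \mapsto b_1 b_2^s$. Composing the diagonal $B \to B \times B$ with $\mu$ gives the homomorphism $b \mapsto b b^s$, which by relation 2 is exactly conjugation by $d$ followed by the inclusion $B \hookrightarrow M_0$; since inner automorphisms act trivially on homology, this composite induces the same map on $H_*$ as the inclusion $\iota \colon B \hookrightarrow M_0$. On the other hand, $\mu \circ (\id \times 1)$ is $\iota$ itself, and $\mu \circ (1 \times \id)$ is $b \mapsto b^s$, which is again conjugate (by $s$) to $\iota$, hence equal to $\iota_*$ on homology. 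Now on $H_n$ with $n>0$, the cross product / Künneth formula gives $H_n(B\times B) = \bigoplus_{p+q=n} H_p(B)\otimes H_q(B) \oplus (\text{Tor terms})$, and the point is that $\mu_* $ applied to the diagonal class equals $\mu_*$ applied to $(\id\times 1)_*$ plus $\mu_*$ applied to $(1\times \id)_*$ \emph{on the relevant piece}: more precisely, the Eckmann–Hilton / shuffle argument shows that for $x \in H_n(B)$, $n>0$, the image of $x$ under $b\mapsto bb^s$ equals $\iota_*(x) + \iota_*(x)$ modulo lower cross terms, forcing $\iota_*(x) = 2\iota_*(x) + (\text{terms built from } H_{<n}(B))$. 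An induction on $n$ then kills everything: for $n=1$, $H_1(B) \to H_1(M_0)$ is zero because $bb^s$ is conjugate to $b$ yet maps $x$ to $2x$; assuming $H_{<n}(B) \to H_{<n}(M_0)$ vanishes handles the cross terms, and then $\iota_*(x) = 2\iota_*(x)$ gives $\iota_*(x) = 0$ in $H_n(M_0)$.

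Assembling: every class in $H_n(B,\Z)$, $n>0$, maps to zero in $H_n(M_0,\Z)$, hence a fortiori to zero in $H_n(M,\Z)$. Writing $M = \varinjlim B$ over its finitely generated subgroups $B$, and using that $H_n(M,\Z) = \varinjlim H_n(B,\Z)$, every element of $H_n(M,\Z)$ is represented by some class in some $H_n(B,\Z)$, which already dies further along the directed system (inside $M_0$, then inside $M$). Therefore $H_n(M,\Z) = 0$ for all $n > 0$, i.e. $M$ is acyclic.

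The main obstacle is the bookkeeping in the middle step: making precise that the "lower cross terms" in the Künneth decomposition are controlled by $H_{<n}(B)$ and genuinely vanish under the inductive hypothesis, so that the relation $\iota_*(x) = 2\iota_*(x)$ can be isolated. This is exactly the flabby/binate-group homology vanishing lemma, and the cleanest route is probably to invoke it in the form already set up in \cite{BDH} rather than re-deriving the shuffle-product identities here.
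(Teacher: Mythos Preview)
The paper does not actually prove this proposition: it is one of two results recalled verbatim from Baumslag--Dyer--Heller \cite{BDH} and simply cited. So there is no ``paper's own proof'' to compare against beyond the reference itself. Your sketch is indeed the outline of the BDH argument --- the reduction to finitely generated subgroups via direct limits, the homomorphism $\mu:B\times B\to M_0$, and the identity $\iota_*=2\iota_*+\text{(cross terms)}$ coming from the fact that $b\mapsto bb^s$, $b\mapsto b$, and $b\mapsto b^s$ are all conjugate in $M_0$ --- and you correctly flag the cross-term bookkeeping as the real content.

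One point deserves care. Your inductive hypothesis is that $\iota_*:H_j(B)\to H_j(M_0)$ vanishes for $0<j<n$, and you assert this ``handles the cross terms'' $\mu_*(\alpha\times\beta)$ with $\alpha\in H_p(B)$, $\beta\in H_q(B)$, $0<p,q<n$. But $\mu:B\times B\to M_0$ does not factor through anything to which that hypothesis applies: there is no multiplication $M_0\times M_0\to M_0$, so $\mu_*(\alpha\times\beta)$ is not visibly built from $\iota_*(\alpha)\in H_p(M_0)$ and $\iota_*(\beta)\in H_q(M_0)$. The BDH proof gets around this either by running the induction with \emph{arbitrary} trivial coefficient modules (so that the K\"unneth filtration on $H_n(B\times B,L)$ has graded pieces $H_p\bigl(B,H_q(B,L)\bigr)$, to which the hypothesis in degree $p<n$ with the new coefficient module $L'=H_q(B,L)$ does apply), or by a direct chain-level homotopy. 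Since you already propose invoking \cite{BDH} for exactly this lemma, your write-up is consistent with how the paper treats the result; just be aware that the inductive hypothesis as you phrased it is not, by itself, enough.
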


\begin{proposition}\label{mitoticity}

Let $G$ be any group. Then $G$ can be embedded in a mitotic group
$A$ (and thus each group is embeddable in an acyclic group). If
$G$ is in $\lhf$ then we can choose $A$ to be in $\lhf$ as
well.

\end{proposition}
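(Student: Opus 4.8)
The plan is to obtain $A$ as a countable increasing union of groups built from $G$ by iterating a single ``mitosis'' step --- essentially the construction of Baumslag--Dyer--Heller \cite{BDH} --- and then to track membership in $\lhf$ through that construction. Combined with the preceding proposition (mitotic groups are acyclic), this gives everything.

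First I would recall the mitosis step. Given an arbitrary group $B$, one embeds $B$ into a group $\mu(B)$ which is a mitosis of $B$. Concretely, one may take the wreath product $B\wr C_2=(B\times B)\rtimes C_2$, with $s$ the nontrivial element of $C_2$ swapping the two factors --- so that for $b\in B=B\times 1$ the element $b^{s}$ lies in $1\times B$ and hence commutes with every element of $B$, which is condition 3 of the definition of a mitosis --- and then adjoin, through an HNN extension, a stable letter $d$ conjugating $B=B\times 1$ onto the diagonal $\{(g,g):g\in B\}$ by $(g,1)\mapsto(g,g)$. Since $(g,g)=(g,1)(1,g)=b\,b^{s}$ this gives $b^{d}=b\,b^{s}$ (condition 2), and $\mu(B)=\langle B\times B,s,d\rangle=\langle B,s,d\rangle$ (condition 1). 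An HNN extension is injective on its base, so $B\hookrightarrow\mu(B)$. (Whatever the precise form of the construction, these verifications are carried out in \cite{BDH}.) Now iterate: put $A_0=G$, $A_{n+1}=\mu(A_n)$ with the canonical inclusions $A_n\hookrightarrow A_{n+1}$, and let $A=\bigcup_{n\ge 0}A_n$, so that $G=A_0\hookrightarrow A$. To see $A$ is mitotic, let $F\le A$ be finitely generated; since $A$ is a directed union of the $A_n$ we have $F\le A_n$ for some $n$, and the elements $s,d\in A_{n+1}\subseteq A$ exhibiting $A_{n+1}$ as a mitosis of $A_n$ satisfy conditions 2 and 3 for all elements of $A_n$, in particular for all elements of $F$; thus $N:=\langle F,s,d\rangle\le A$ is a mitosis of $F$. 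Hence $A$ is mitotic, and by the preceding proposition it is acyclic (and then $H^{n}(A,\Z)=0$ for all $n>0$ by universal coefficients).

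For the second assertion, assume $G\in\lhf$. Since $\lhf$ is closed under directed unions, it suffices to prove $\mu(B)\in\lhf$ whenever $B\in\lhf$; then by induction each $A_n\in\lhf$, and so $A=\bigcup_nA_n\in\lhf$. Now $\mu(B)$ is built from $B$ by a finite direct product ($B\times B$), an extension by the finite group $C_2$, and an HNN extension with associated subgroups isomorphic to $B$. That $\lhf$ is subgroup closed, closed under directed unions, and closed under finite direct products and under extensions by finite groups is elementary (reduce to finitely generated subgroups, using that $\hf$ is extension closed); hence $B\times B$ and $B\wr C_2$ lie in $\lhf$. If the variant of $\mu(B)$ one uses instead involves an infinite direct sum $B^{(X)}$ of copies of $B$, that too lies in $\lhf$, being a directed union of finite direct products of copies of $B$.

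The step I expect to be the main obstacle is the remaining one: that $\lhf$ is closed under HNN extensions (and amalgamated free products) whose base and associated subgroups lie in $\lhf$. One proves this via Bass--Serre theory: such a group acts on a tree, a contractible $1$-dimensional CW-complex, with cell stabilizers conjugate to the base or to an associated subgroup, hence in $\lhf$. The subtlety is that the base group is only known to lie in $\lhf$, not in $\hf$, so one cannot invoke $\h_1(\hf)=\hf$ directly; instead one passes to a finitely generated subgroup, which acts with finite quotient on an invariant subtree and therefore is a finite graph of groups with vertex groups in $\lhf$, and one uses that a finitely generated group lying in $\lhf$ lies in $\hf$ to close the induction. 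Granting this closure property --- which may also simply be quoted as a known feature of the Kropholler hierarchy (see \cite{K2}) --- we get $\mu(B)\in\lhf$, and the proposition follows.
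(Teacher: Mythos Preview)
Your proof is correct and follows the same approach as the paper: iterate a one-step mitosis construction and take the directed union, then track $\lhf$ membership through its closure under direct products, HNN extensions, and countable unions. The only (inessential) difference is that the paper realizes both $s$ and $d$ as stable letters of two successive HNN extensions over $G\times G$ (namely $(g,1)^t=(g,g)$ and $(g,1)^u=(1,g)$), whereas you realize $s$ as the swap in $B\wr C_2$ and need only one HNN extension for $d$; both variants produce a mitosis of $B$ and both remain in $\lhf$ by the same closure properties (the paper simply asserts the HNN closure that you sketch via Bass--Serre theory).
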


\begin{proof}

The idea is that we begin with a group $G$, embed it into a
mitosis of $G$, $m(G)$, embed $m(G)$ into $m^2(G)$ which is a
mitosis of $m(G)$, and so on, and we take $A$ to be the union. Clearly $A$ is mitotic.
The group $m(G)$ is defined in the following way: let $D=G\times G$, let
$E=\langle D,t;t^{-1}(g,1)t = (g,g), g\in G\rangle$ and let $m(G)
= \langle E,u;u^{-1}(g,1)u=(1,g),g\in G\rangle$. It is easy to see
that indeed $m(G)$ is a mitosis of $G$. If $G$ is in $\lhf$, then
the same is true for $D$, $E$ and $m(G)$, since $\lhf$ is closed
under taking direct products, HNN extensions, and countable
unions.\end{proof}

We proceed now to prove Theorem \ref{main1}. We state and prove a somewhat
more general version using the above terminology.

\begin{proposition}\label{list-pro}

Let $(\Ga_0,H_0)$ be a pair of a group and a finite index normal
subgroup. The group $\Ga_0$ can be embedded in a group $\Ga$ which
has a finite index normal subgroup $H$ such that the following
conditions hold: \\1. $H\cap \Ga_0=H_0$. \\2. the inclusion
$\Ga_0\rightarrow \Ga$ induces an isomorphism $\Ga_0/H_0\cong
\Ga/H$. \\3. The pair $(\Ga,H)$ satisfies Moore's condition if and
only if $(\Ga_0,H_0)$ does. \\4. The pair $(\Ga,H)$ satisfies
Moore's conjecture if and only if $(\Ga_0,H_0)$ does. \\5. $\Ga$ is
in $\lhf$ if and only if $\Ga_0$ is. \\6. The group $H$ is mitotic
(and thus acyclic).

\end{proposition}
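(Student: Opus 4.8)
The plan is to build $\Ga$ as an iterated mitosis-type construction applied to $\Ga_0$, but carried out \emph{relative} to the normal subgroup $H_0$, so that the quotient $\Ga/H$ stays equal to $\Ga_0/H_0 \cong \Z_p$ throughout. Concretely, I would first treat the case where $\Ga_0 = H_0$, i.e.\ embed any group $B$ into a mitotic group $m^\infty(B)$ by iterating the mitosis $m(-)$ of Proposition \ref{mitoticity}, obtaining $H$ mitotic (hence acyclic) with $B \subseteq H$; the $\lhf$ clause is exactly the last sentence of Proposition \ref{mitoticity}. For the general case I would perform the same iteration equivariantly with respect to the action of the finite cyclic group $\Ga_0/H_0 = \langle x\rangle$: starting from $H_0 \normal \Ga_0$, at each stage form $m(H_j)$ but simultaneously extend the $x$-action (and the extension class of $1 \to H_j \to \widehat{\Ga_j} \to \Z_p \to 1$) so that one gets $H_{j+1} \normal \Ga_{j+1}$ with $\Ga_{j+1}/H_{j+1} = \Z_p$ and $H_{j+1} \cap \Ga_j = H_j$, $\Ga_{j+1}\cap H_{j+1}$ restricting correctly. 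Taking the union over $j$ gives $H \normal \Ga$ with $H$ mitotic, $H \cap \Ga_0 = H_0$, and $\Ga/H = \Ga_0/H_0$, establishing (1), (2) and (6). The $\lhf$ clause (5) follows because $\lhf$ is subgroup-closed and closed under the operations ($D \mapsto E$, HNN, countable unions) used in $m(-)$, exactly as in Proposition \ref{mitoticity}: if $\Ga_0 \in \lhf$ then so is each $\Ga_j$ and hence $\Ga$; conversely $\Ga_0 \le \Ga$ so if $\Ga \in \lhf$ then $\Ga_0 \in \lhf$.

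Next I would handle Moore's \emph{condition} (3). Since $\Ga_0 \le \Ga$ with $\Ga_0/H_0 \xrightarrow{\ \cong\ } \Ga/H$, an element of prime order in $\Ga_0 - H_0$ is automatically one in $\Ga - H$, so one direction is immediate. For the converse I would argue that the mitosis construction does not introduce new torsion in the relevant coset: any element $g \in \Ga - H$ maps to a generator of $\Z_p$, and if $g$ had order $p$ then $\langle g\rangle$ would be a complement to $H$ inside $\langle H, g\rangle$; but I would show (using that $H$ is built up from $H_0$ by mitoses, each of which is torsion-free-over-$H_j$ in the appropriate sense, or more directly by a Mayer--Vietoris / normal-form argument on the HNN pieces of $m(-)$) that such a splitting already forces a splitting back inside $\Ga_0$, i.e.\ produces an order-$p$ element of $\Ga_0 - H_0$. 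Equivalently and more cleanly: the Bockstein class $\beta_{\Ga,H} \in H^2(\Ga,\Z)$ restricts to $\beta_{\Ga_0,H_0}$ under $\Ga_0 \hookrightarrow \Ga$ (both are inflated from the same class of $\Z_p$), so by Theorem \ref{Serre}-type reasoning at the level of the quotient $\Z_p$ the two conditions coincide. This is the step I expect to be the main obstacle, because it requires knowing that the acyclic "padding" $H$ does not accidentally split off the quotient; the saving grace is that acyclicity of $H$ makes $H^*(\Ga,\Z) \cong H^*(\Z_p,\Z)$ via inflation (by the LHS spectral sequence, since $H^{>0}(H,\Z)=0$), so $\beta_{\Ga,H}$ is non-nilpotent and the whole cohomology is controlled by $\Z_p$.

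Finally, for Moore's \emph{conjecture} (4) I would invoke the closure results already proved. The inclusion $\phi : \Ga_0 \hookrightarrow \Ga$ fits into a morphism of short exact sequences $1 \to H_0 \to \Ga_0 \to \Z_p \to 1$ into $1 \to H \to \Ga \to \Z_p \to 1$ with the right-hand vertical an isomorphism, and $\phi$ is injective; so Proposition \ref{closure} (the injective case) gives: if $(\Ga_0,H_0)$ satisfies Moore's conjecture then so does $(\Ga,H)$. For the reverse implication, suppose $(\Ga,H)$ satisfies Moore's conjecture and let $M_0$ be a $\Ga_0$-module projective over $H_0$; I would show $M_0$ is projective over $\Ga_0$ by the same device used inside the proof of Proposition \ref{closure}, namely that it suffices to prove $M_0$ is $\Ga_0$-flat (then apply Theorem \ref{Benson}), and flatness over $\Ga_0$ can be read off from flatness data over $\Ga$ together with the fact that $\Z\Ga$ is flat — more precisely, induce $M_0$ up to $\widetilde{M} = \Ind_{\Ga_0}^{\Ga} M_0$, note $\widetilde M$ is $H$-projective hence $\Ga$-projective by hypothesis, and then $M_0$ is a summand of $\widetilde M|_{\Ga_0}$, hence $\Ga_0$-projective. (One must check $\widetilde M$ is $H$-projective: restricting to $H$, $\Ga/\Ga_0 H_0 \cong H/H_0$-many copies of inductions of $M_0|_{H_0}$ from $H_0$-conjugates appear, each $H_0$-projective, so $\widetilde M|_H$ is a sum of modules induced from $H_0$-projective modules and hence $H$-projective.) Assembling (1)--(6) completes the proof, and Theorem \ref{main1} is the specialization dropping clauses (5)--(6).
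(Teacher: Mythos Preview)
Your proposal has two genuine gaps, one in the construction and one in condition (4), and the paper's route avoids both.

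\textbf{The construction.} You propose to build $\Ga$ by ``equivariantly'' iterating the mitosis functor $m(-)$ on $H_0$ while extending the $\Z_p$-action at each step. It is not clear this can be done: the group $m(H_j)$ is built from two HNN extensions over $H_j\times H_j$, and there is no canonical way to extend an outer $\Z_p$-action on $H_j$ through these. The paper sidesteps this entirely: at each stage one embeds $H_j$ into \emph{some} mitotic group $A$ (with no equivariance requirement) and sets $\Ga_{j+1}=\Ga_j *_{H_j} A$, with $H_{j+1}$ the kernel of the obvious map $\Ga_{j+1}\twoheadrightarrow \Ga_j/H_j$. This amalgamated free product is what makes condition (3) tractable: every torsion element of $\Ga_{j+1}$ is conjugate into one of the factors $\Ga_j$ or $A$, and since $A\subseteq H_{j+1}$, any torsion element outside $H_{j+1}$ is conjugate into $\Ga_j\setminus H_j$. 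Your Bockstein argument for (3) does not work: for infinite groups Moore's condition is \emph{not} equivalent to nilpotency of $\beta$ (that is the whole point of this section), so knowing $\beta_{\Ga,H}$ is non-nilpotent tells you nothing about whether $(\Ga,H)$ satisfies Moore's condition.

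\textbf{Condition (4): you have Proposition \ref{closure} backwards.} In Proposition \ref{closure} the map goes $\phi:\Ga\to\Ga'$, and the conclusion is that if the \emph{codomain} pair $(\Ga',H')$ satisfies Moore's conjecture then so does the \emph{domain} pair $(\Ga,H)$. With $\phi:\Ga_0\hookrightarrow\Ga$, this yields ``$(\Ga,H)$ satisfies $\Rightarrow$ $(\Ga_0,H_0)$ satisfies,'' which is the \emph{easy} direction --- and indeed your ``reverse implication'' paragraph reproves exactly this same direction by hand via induction and Mackey. You have therefore proved one implication twice and the other not at all. The hard direction, ``$(\Ga_0,H_0)$ satisfies $\Rightarrow$ $(\Ga,H)$ satisfies,'' genuinely requires the two closure results the paper develops for this purpose: at each finite stage, $\Ga_{j+1}=\Ga_j*_{H_j}A$ acts on its Bass--Serre tree with vertex stabilizers $\Ga_j$ and $A$ and edge stabilizer $H_j$, so Proposition \ref{class-close} pushes Moore's conjecture from $(\Ga_j,H_j)$ to $(\Ga_{j+1},H_{j+1})$; then for the union $\Ga=\bigcup\Ga_j$ one invokes Proposition \ref{localness}. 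Neither ingredient appears in your plan.
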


In order to prove the proposition, we first prove the following
lemma:

\begin{lemma}\label{main lemma}

The group $\Ga_0$ can be embedded in a group $\Ga_1$ which has a
finite index subgroup $H_1$ such that conditions 1-5 of the
proposition hold, and such $H_1$ contains a mitosis of $H_0$.\end{lemma}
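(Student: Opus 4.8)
The plan is to build $\Ga_1$ by an explicit amalgamated-product construction that simultaneously introduces a mitosis of $H_0$ and keeps tight control over the quotient by $H$. First I would form a mitosis $m(H_0)$ of $H_0$, using the construction recalled in the proof of Proposition \ref{mitoticity}: set $D = H_0 \times H_0$, then $E = \langle D, t; t^{-1}(h,1)t = (h,h), h \in H_0 \rangle$, then $m(H_0) = \langle E, u; u^{-1}(h,1)u = (1,h), h\in H_0\rangle$. The key point is that $H_0$ sits inside $m(H_0)$ via $h \mapsto (h,1)$ (or $h\mapsto (1,h)$), so both copies of $H_0$ in $D$ are identified with the original $H_0$ inside $m(H_0)$. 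I would then define $\Ga_1 = \Ga_0 *_{H_0} m(H_0)$, the free product of $\Ga_0$ and $m(H_0)$ amalgamated over their common subgroup $H_0$, and let $H_1$ be the kernel of the composite $\Ga_1 \twoheadrightarrow \Ga_0/H_0$ obtained by sending $\Ga_0$ to $\Ga_0/H_0$ in the obvious way and $m(H_0)$ to $1$ (this is well-defined precisely because $H_0$ maps to $1$ under the first map and is the amalgamating subgroup).

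With these definitions, conditions 1 and 2 are essentially immediate: $H_1$ surjects onto nothing new, so $\Ga_1/H_1 \cong \Ga_0/H_0$, giving (2); and since the retraction $\Ga_1 \to \Ga_0/H_0$ restricted to $\Ga_0$ is the quotient map, one gets $H_1 \cap \Ga_0 = H_0$, giving (1). Clearly $H_1$ contains $m(H_0)$ (the whole of $m(H_0)$ maps to $1$), hence it contains a mitosis of $H_0$, which is the last clause of the lemma. For condition 3, I would note that Moore's condition for $(\Ga_1,H_1)$ is about elements of prime order in $\Ga_1 - H_1$; by the standard description of torsion in an amalgamated free product (every finite-order element is conjugate into a factor), any prime-order element of $\Ga_1$ lies in a conjugate of $\Ga_0$ or of $m(H_0)$, and $m(H_0)$ is torsion-free (being acyclic, or by inspecting the HNN structure), so all torsion comes from $\Ga_0$; comparing with the retraction to $\Ga_0/H_0$ then shows $(\Ga_1,H_1)$ satisfies Moore's condition iff $(\Ga_0,H_0)$ does. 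Condition 5 follows from Proposition \ref{mitoticity} and closure of $\lhf$ under amalgamated products over finite-index\,/\,nice subgroups and countable unions: if $\Ga_0 \in \lhf$ then $m(H_0) \in \lhf$ (built from $H_0 \in \lhf$ by direct products and HNN extensions), and $\lhf$ is subgroup-closed and closed under the relevant amalgam, giving $\Ga_1 \in \lhf$; conversely $\Ga_0 \leq \Ga_1$ and $\lhf$ is subgroup-closed.

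The real work is condition 4, that $(\Ga_1,H_1)$ satisfies Moore's conjecture iff $(\Ga_0,H_0)$ does. One direction uses the closure results already available: the retraction $r:\Ga_1 \to \Ga_0$ (killing $m(H_0)$, or rather the composite with a splitting) together with the inclusion $\iota:\Ga_0 \hookrightarrow \Ga_1$ gives morphisms of the relevant short exact sequences, so by Proposition \ref{closure} applied appropriately, conjecture-validity transfers between the two pairs — in one direction via $\iota$ and in the other via $r$, both of which induce isomorphisms on the prime-order quotient $\Ga/H$. The subtle point, and the step I expect to be the main obstacle, is verifying that these maps genuinely fit the hypotheses of Proposition \ref{closure}: one needs a map $\Ga_1 \to \Ga_0$ that is the identity on $\Ga_0$ and respects the quotient, which requires the amalgam to retract onto $\Ga_0$ — and it does, because $m(H_0)$ retracts onto $H_0$ (the mitosis construction is built from HNN extensions that kill the stable letters $t,u$, retracting $m(H_0) \to D \to H_0$). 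So the retraction $m(H_0)\to H_0 \hookrightarrow \Ga_0$ glues with $\id_{\Ga_0}$ to a retraction $\rho:\Ga_1 \to \Ga_0$. Checking that $\rho$ maps $H_1$ onto $H_0$ and induces the identity on $\Ga_0/H_0$ is then routine, and Proposition \ref{closure} applied to $\iota$ and to $\rho$ closes the argument. \qed
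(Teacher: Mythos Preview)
Your construction of $\Gamma_1 = \Gamma_0 *_{H_0} m(H_0)$ and $H_1 = \ker(\Gamma_1 \to \Gamma_0/H_0)$ is essentially the paper's (the paper amalgamates over a full mitotic group $A \supseteq H_0$ rather than a single mitosis $m(H_0)$, but this makes no difference for the lemma). Conditions 1, 2, 5 and the ``contains a mitosis'' clause go through as you say.

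There is, however, a genuine gap in your treatment of condition 4. For the direction ``$(\Gamma_0,H_0)$ satisfies Moore's conjecture $\Rightarrow$ $(\Gamma_1,H_1)$ does'' you want to apply Proposition~\ref{closure} to a retraction $\rho:\Gamma_1 \to \Gamma_0$, and for this you need a retraction $m(H_0) \to H_0$ restricting to the identity on the embedded copy $H_0 \cong \{(h,1)\}$. No such retraction exists unless $H_0$ is abelian. Indeed, if $\pi: m(H_0) \to H_0$ satisfies $\pi(h,1)=h$, then the HNN relation $u^{-1}(h,1)u = (1,h)$ forces $\pi(1,h) = a^{-1}ha$ where $a = \pi(u) \in H_0$; but $(h,1)$ and $(1,k)$ commute in $D=H_0\times H_0$, so $h$ and $a^{-1}ka$ must commute in $H_0$ for all $h,k$, forcing $H_0$ to be abelian. (Your informal ``kill $t,u$'' suggestion fares no better: killing $t$ in $t^{-1}(h,1)t=(h,h)$ forces $(1,h)=1$, and then killing $u$ in $u^{-1}(h,1)u=(1,h)$ forces $(h,1)=1$ as well, so the whole group collapses.)

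The paper handles this direction by a different mechanism. Since $\Gamma_1$ is an amalgamated free product, it acts on its Bass--Serre tree with vertex stabilizers conjugate to $\Gamma_0$ or to $A$ and edge stabilizers conjugate to $H_0$. For each stabilizer $K$ the pair $(K, K\cap H_1)$ satisfies Moore's conjecture: trivially when $K$ is a conjugate of $A$ or of $H_0$ (then $K \subseteq H_1$), and by hypothesis when $K$ is a conjugate of $\Gamma_0$. Proposition~\ref{class-close} then yields the conjecture for $(\Gamma_1, H_1)$. So the missing ingredient in your argument is precisely the tree action and Proposition~\ref{class-close}, not a second application of Proposition~\ref{closure}.

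A minor side remark on condition 3: your claim that $m(H_0)$ is torsion-free is not correct in general (if $H_0$ has torsion so does $D = H_0 \times H_0 \subseteq m(H_0)$; and $m(H_0)$ is a mitosis, not mitotic, hence not automatically acyclic). This does not actually damage the argument, since $m(H_0) \subseteq H_1$ and Moore's condition concerns only prime-order elements in $\Gamma_1 \setminus H_1$; the correct phrasing, as in the paper, is that any finite-order element of $\Gamma_1$ lying outside $H_1$ must be conjugate into $\Gamma_0 \setminus H_0$.
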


\begin{proof}

Consider an embedding of $H_0$ in a mitotic group $A$. By lemma
\ref{mitoticity}, we may assume that if $\Ga$ is in $\lhf$, then
so does $A$. Define $\Ga_1=\Ga_0*_{H_0}A$. Then $\Ga_0$ is a
subgroup of $\Ga_1$. There exists an epimorphism
$\phi:\Ga_1\rightarrow \Ga_0/H_0$ which sends $A$ to the trivial
element, and which maps $\Ga_0$ onto $\Ga_0/H_0$ canonically.
Denote $ker(\phi)$ by $H_1$. Note that $H_1$
contains $A$ which is a mitosis of $H_0$. It is easy to see that
$(\Ga_1,H_1)$ satisfies conditions 1,2, and 5 of the proposition.
We now prove that the pair $(\Ga_1,H_1)$ also satisfies conditions 3 and 4.
If $(\Ga_0,H_0)$ violates Moore's condition,
then there exists an element $x\in \Ga_0$ such that $\langle x
\rangle\cap H_0 =1$, and so the same $x$ violates Moore's
condition for $\Ga_1$. In the other direction, if there exist an
$x\in \Ga_1$ such that $\langle x \rangle \cap H_1 = 1$, then $x$
is of finite order, and thus is conjugate to an element of finite
order in $A$ or in $\Ga_0$. As $x\not\in H_1$, we conclude that
$x$ is conjugate to an element $y\in\Ga_0$, which violates Moore's
condition for $(\Ga_0,H_0)$. To prove 4, notice that if
$(\Ga_1,H_1)$ satisfies Moore's conjecture, then also
$(\Ga_0,H_0)$ satisfies Moore's conjecture by \ref{closure}. If
$(\Ga_0,H_0)$ satisfies Moore's conjecture, we proceed as follows:
$\Ga_1$ is the amalgamated free product of $A$ and $\Ga_0$ over
$H_0$, and thus $\Ga_1$ acts on a tree (which is in particular a
CW-complex) such that the points of the tree has two orbits, one
with stabilizer $A$ and the other with stabilizer $\Ga_0$, and the
edges have one orbit, with stabilizer $H_0$. Using now proposition
\ref{class-close}, the result follows.\end{proof}%
\emph{proof {of proposition \ref{list-pro}}} Using the lemma above, one can
form a chain of embeddings
$\Ga_0\injects\Ga_1\injects\Ga_2\injects\cdots$. We define $\Ga$
to be the union of the groups $\Ga_i$, and $H$ to be the union of
the subgroups $H_i$. It is easy to see that $(\Ga,H)$ satisfies
conditions 1 and 2. Condition 3 is satisfied by an argument similar to
the one used in Lemma \ref{main lemma}. Condition 4 is proved in one direction
using lemma \ref{closure} again. On the other direction we proceed
as follows: Suppose that Moore's conjecture is true for $(\Ga_0,H_0)$
Then it is true also for every $(\Ga_i,H_i)$. As
every finitely generated subgroup of $\Ga$ is contained in some
$\Ga_i$, using proposition \ref{closure} and \ref{localness}, the
result follows. Condition 5 is true since Kropholler's hierarchy
is closed under countable unions. The only thing left to prove is
that $H$ is mitotic. This follows easily from the fact that
$H_{i+1}$ contains a mitosis of $H_i$.\qedsymbol

Notice in particular that if $H$ is acyclic, then
$H^*(\Ga/H,\Z)\cong H^*(\Ga,\Z)$ by inflation. This can easily be
seen by considering the LHS spectral sequence in cohomology which
corresponds the the group extension $$1\rightarrow H\rightarrow
\Ga\rightarrow \Ga/H\rightarrow 1.$$ As a result, take a pair
$(\Ga_0,H_0)$ such that $\Ga/H$ is cyclic of prime order, and such
that Moore's conjecture is true for $(\Ga_0,H_0)$. Embed the pair
in a pair $(\Ga,H)$ such that $H$ is acyclic, using the
proposition. Then $(\Ga,H)$ is a pair for which Moore's conjecture
is true even though the Bockstein is not nilpotent. If one takes
$\Ga_0$ to be a group in $\lhf$ then $\Ga$ would be in $\lhf$ as
well. If $\Ga_0\not\in\lhf$, (for example, take $\Ga_0$ to be
Thompson's group $F$), then the same holds for $\Ga$, since $\lhf$
is subgroup closed. In conclusion, we have the
following corollary:

\begin{corollary}

Any pair of groups $(\Ga_0,H_0)$ such that $H_0$ is a normal subgroup of finite
index in $\Ga_0$ can be embedded in a pair of groups $(\Ga,H)$
such that $\Ga\in\lhf$ if and only if $\Ga_0\in\lhf$, Moore's
conjecture is true for $(\Ga_0,H_0)$ if and only if it is true for
$(\Ga,H)$, and $H$ is acyclic. As a result, there are examples,
inside and outside $\lhf$, of pairs of groups $(\Ga,H)$ such that
$\Ga/H$ is cyclic of prime index, $(\Ga,H)$ satisfies Moore's
condition, and the Bockstein element is not nilpotent. In
particular, it is enough to prove Moore's conjecture for pairs of
groups $(\Ga,H)$ such that $\Ga/H$ is cyclic of prime order, $H$
is acyclic, and $(\Ga,H)$ satisfies Moore's
condition.

\end{corollary}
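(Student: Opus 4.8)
The plan is to read the statement off Proposition~\ref{list-pro} together with two small cohomological observations, so the argument is essentially one of assembly rather than of new ideas.

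\textbf{The embedding clause.} Given a pair $(\Ga_0,H_0)$ with $H_0$ normal of finite index in $\Ga_0$, I would apply Proposition~\ref{list-pro} verbatim to produce $(\Ga,H)$ satisfying its conditions 1--6. Condition 5 yields $\Ga\in\lhf$ if and only if $\Ga_0\in\lhf$; conditions 3 and 4 together yield that Moore's conjecture holds for $(\Ga,H)$ if and only if it holds for $(\Ga_0,H_0)$ (recall that in the terminology fixed in the introduction, ``Moore's conjecture holds for $(\Ga,H)$'' already incorporates Moore's condition, and condition 3 matches the two sides on that point). Finally condition 6 says $H$ is mitotic, hence acyclic by the proposition that mitotic groups are acyclic. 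That is the whole first assertion.

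\textbf{The examples.} Here the point is to feed Proposition~\ref{list-pro} a pair for which Moore's conjecture is already known and whose quotient is cyclic of prime order. For an example inside $\lhf$, take $\Ga_0=\Z$ and $H_0=p\Z$ with $p$ prime; then $\Ga_0$ has cohomological dimension $1$, so Moore's conjecture for $(\Ga_0,H_0)$ is immediate from Corollary~\ref{finiteness}, and $\Ga_0\in\lhf$. For an example outside $\lhf$, take $\Ga_0=F$, Thompson's group, which is not in $\lhf$; since $F/[F,F]\cong\Z^2$ and $[F,F]$ is infinite simple, the intersection $K$ of all finite index subgroups of $F$ equals $[F,F]$ and $F/K\cong\Z^2$, so by Corollary~\ref{finite-K} Moore's conjecture holds for $(F,H_0)$ for every finite index normal $H_0$, and we may choose $H_0$ of prime index (pull back an index-$p$ subgroup of $\Z^2$). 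In either case, embed $(\Ga_0,H_0)$ in $(\Ga,H)$ by the first clause. Then $\Ga/H\cong\Ga_0/H_0$ is cyclic of prime order, $(\Ga,H)$ satisfies Moore's condition, and $H$ is acyclic. Acyclicity of $H$ makes the inflation map $H^*(\Ga/H,\Z)\to H^*(\Ga,\Z)$ an isomorphism (the LHS spectral sequence of $1\to H\to\Ga\to\Ga/H\to 1$ collapses, as noted just before the statement), and since $\beta_{\Ga,H}$ is by definition the inflation of a generator $\hat\beta$ of $H^2(\Ga/H,\Z)\cong\Z_p$, whose powers generate the nonzero even-degree cohomology of $\Z_p$ and so never vanish, $\beta_{\Ga,H}$ is not nilpotent. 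Carrying this out once inside and once outside $\lhf$ gives the ``as a result'' sentence.

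\textbf{The reduction.} For the last sentence, let $(\Ga_0,H_0)$ be an arbitrary pair satisfying Moore's condition. By the reduction lemma at the start of Section~\ref{general} it suffices to treat the case where $H_0$ is normal of prime index in $\Ga_0$, so that $\Ga_0/H_0$ is cyclic of prime order. Apply the first clause to embed $(\Ga_0,H_0)$ in $(\Ga,H)$ with $H$ acyclic; then $\Ga/H\cong\Ga_0/H_0$ is still cyclic of prime order and $(\Ga,H)$ still satisfies Moore's condition (condition 3). If Moore's conjecture were established for every pair with cyclic prime-order quotient, acyclic kernel, and Moore's condition, it would in particular hold for $(\Ga,H)$, whence condition 4 of Proposition~\ref{list-pro} would give it for $(\Ga_0,H_0)$. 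Since Proposition~\ref{list-pro} already carries essentially all the load, I do not expect a serious obstacle in this corollary; the only steps needing a line of care are the implication ``$H$ acyclic $\Rightarrow\beta_{\Ga,H}$ non-nilpotent'' via the inflation isomorphism, and the compatibility of the prime-index reduction with the embedding, both routine given the machinery in place.
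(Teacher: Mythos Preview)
Your proposal is correct and follows essentially the same approach as the paper: you apply Proposition~\ref{list-pro} to obtain the embedding, use the collapsed LHS spectral sequence (as the paper notes just before the corollary) to see that $H$ acyclic forces the Bockstein to be non-nilpotent, and instantiate with an $\lhf$ group and with Thompson's $F$ for the two families of examples. Your write-up is in fact more explicit than the paper's---you name the $\lhf$ example $(\Z,p\Z)$, spell out the prime-index reduction from Section~\ref{general} for the final clause, and articulate why $\hat\beta$ has non-vanishing powers---but the underlying argument is identical.
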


\end{section}

\end{document}